\providecommand{\U}[1]{\protect\rule{.1in}{.1in}}
\newtheorem{theorem}{Theorem}
\theoremstyle{plain}
\newtheorem{example}{Example}
\newtheorem{lemma}{Lemma}
\newtheorem{remark}{Remark}
\numberwithin{equation}{section}
\begin{document}
\title[Oscillations: self improvement]{BMO: Oscillations, self improvement, Gagliardo coordinate spaces and reverse
Hardy inequalities}
\author{Mario Milman}
\address{Instituto Argentino de Matematica}
\email{mario.milman@gmail.com}
\urladdr{https://sites.google.com/site/mariomilman}
\thanks{The author was partially supported by a grant from the Simons Foundation
(\#207929 to Mario Milman)}

\begin{abstract}
A new approach to classical self improving results for $BMO$ functions is
presented. \textquotedblleft Coordinate Gagliardo spaces" are introduced and a
generalized version of the John-Nirenberg Lemma is proved. Applications are provided.

\end{abstract}
\dedicatory{Para Corita\footnote{See last Section: *a brief personal note on Cora
Sadosky*.}}\maketitle
\tableofcontents

\section{Introduction and Background}

Interpolation theory provides a framework, as well as an arsenal of tools,
that can help in our understanding of the properties of function spaces and
the operators acting on them. Conversely, the interaction of the abstract
theory of interpolation with concrete function spaces can lead to new general
methods and results. In this note we consider some aspects of the interaction
between interpolation theory and $BMO,$ focussing on the self improving
properties of $BMO$ functions.

To fix the notation, in this section we shall consider functions defined on a
fixed cube, $Q_{0}\subset R^{n}.$ A prototypical example of the
self-improvement exhibited by $BMO$ functions is the statement that a function
in $BMO$ automatically belongs to all $L^{p}$ spaces, $p<\infty,$%
\begin{equation}
BMO\subset%
{\displaystyle\bigcap\limits_{p\geq1}}
L^{p}. \label{rates0}%
\end{equation}
In fact, $BMO$ is contained in the Orlicz space $e^{L}.$ This is one of the
themes underlying the John-Nirenberg Lemma \cite{johnN}. One way to obtain
this refinement is to make explicit the rates of decay of the family of
embeddings implied by (\ref{rates0}).

We consider in detail an inequality apparently first shown in \cite{chenzu},%
\begin{equation}
\left\Vert f\right\Vert _{L^{q}}\leq C_{n}q\left\Vert f\right\Vert _{L^{p}%
}^{p/q}\left\Vert f\right\Vert _{BMO}^{1-p/q},1\leq p<q<\infty. \label{intro1}%
\end{equation}
With (\ref{intro1}) at hand we can, for example, extrapolate by the $\Delta
-$method of \cite{jm}, and the exponential integrability of $BMO$ functions
follows (cf. (\ref{expo}) below)%
\begin{equation}
\left\Vert f\right\Vert _{e^{L}}\sim\sup_{q>1}\frac{\left\Vert f\right\Vert
_{L^{q}}}{q}\leq c_{n}\left\Vert f\right\Vert _{BMO}. \label{linfty0}%
\end{equation}
More generally, for compatible Banach spaces, interpolation inequalities of
the form%
\begin{equation}
\left\Vert f\right\Vert _{X}\leq c(\theta)\left\Vert f\right\Vert _{X_{1}%
}^{1-\theta}\left\Vert f\right\Vert _{X_{2}}^{\theta},\text{ }\theta\in(0,1),
\label{intro2}%
\end{equation}
where $c(\theta)$ are constants that depend only on $\theta,$ play an
important role in analysis. What is needed to extract information at the end
points (e.g. by \textquotedblleft extrapolation" \cite{jm}) is to have good
estimates of the rate of decay $c(\theta),$ as $\theta$ tends to $0$ or to
$1.$ We give a brief summary of inequalities of the form (\ref{intro2}) for
the classic methods of interpolation in Section \ref{secc:inter} below. For
example, a typical interpolation inequality of the form (\ref{intro2}) for the
Lions-Peetre real interpolation spaces can be formulated as follows. Given a
compatible pair\footnote{We refer to Section \ref{secc:inter} for more
details.} of Banach spaces $\vec{X}=(X_{1},X_{2})$, the \textquotedblleft%
$K-$functional\textquotedblright\ is defined for $f\in\Sigma(\vec{X}%
)=X_{1}+X_{2},t>0,$ by
\begin{equation}
K(t,f;\vec{X}):=K(t,f;X_{1},X_{2})=\inf_{f=f_{1}+f_{2},f_{i}\in X_{i}%
}\{\left\Vert f_{1}\right\Vert _{X_{1}}+t\left\Vert f_{2}\right\Vert _{X_{2}%
}\}. \label{k1}%
\end{equation}
The real interpolation spaces $\vec{X}_{\theta,q}$ can be defined through the
use of the $K-$functional. Let $\theta\in(0,1),0<q\leq\infty,$ then we let%
\begin{equation}
\vec{X}_{\theta,q}=\{f\in\Sigma(\vec{X}):\left\Vert f\right\Vert _{\vec
{X}_{\theta,q}}<\infty\}, \label{arriba}%
\end{equation}
where\footnote{with the usual modification when $q=\infty.$}%
\[
\left\Vert f\right\Vert _{\vec{X}_{\theta,q}}=\left\{  \int_{0}^{\infty
}\left[  t^{-\theta}K(t,f;\vec{X})\right]  ^{q}\frac{dt}{t}\right\}  ^{1/q}.
\]
We have (cf. Lemma \ref{marcaada} below) that, for $f\in X_{1}\cap
X_{2},0<\theta<1,1\leq q\leq\infty$,%
\begin{equation}
\lbrack(1-\theta)\theta q]^{1/q}\left\Vert f\right\Vert _{(X_{1}%
,X_{2})_{\theta,q}}\leq\left\Vert f\right\Vert _{X_{1}}^{1-\theta}\left\Vert
f\right\Vert _{X_{2}}^{\theta}. \label{rates}%
\end{equation}
We combine (\ref{rates}), the known real interpolation theory of $BMO$ (cf.
\cite[Theorem 6.1]{besa}, \cite{bs} and the references therein), sharp reverse
Hardy inequalities (cf. \cite{ren} and \cite{mil}), and the re-scaling of
inequalities via the reiteration method (cf. \cite{bl}, \cite{holm0}) to give
a new *interpolation* proof of (\ref{intro1}) in Lemma \ref{marcao} below.

Let us now recall how the study of $BMO$ led to new theoretical developments
in interpolation theory\footnote{Paradoxically, except for Section
\ref{secc:bilinear}, in this paper we do not discuss interpolation theorems
per se. For interpolation theorems involving $BMO$ type of spaces there is a
large literature. For articles that are related to the developments in this
note I refer, for example, to \cite{herz}, \cite{bedesa}, \cite{sag},
\cite{misa}, \cite{jm1}, \cite{kru}.}.

A natural follow up question to (\ref{linfty0}) was to obtain the best
possible integrability condition satisfied by $BMO$ functions. The answer was
found by Bennett-DeVore-Sharpley \cite{bedesa}. They showed the
inequality\footnote{where $f^{\ast}$ denotes the non-increasing rearrangement
of $f$ and $f^{\ast\ast}(t)=\frac{1}{t}\int_{0}^{t}f^{\ast}(s)ds.$}
\begin{equation}
\left\Vert f\right\Vert _{L(\infty,\infty)}:=\sup_{t}\left\{  f^{\ast\ast
}(t)-f^{\ast}(t)\right\}  \leq c_{n}\left\Vert f\right\Vert _{BMO}.
\label{linfty}%
\end{equation}
The refinement here is that the (non-linear) function space $L(\infty
,\infty),$ defined by the condition
\[
\left\Vert f\right\Vert _{L(\infty,\infty)}<\infty,
\]
is strictly contained\footnote{The smallest rearrangement invariant space that
contains $BMO$ is $e^{L}$ as was shown by Pustylnik \cite{pus}.} in $e^{L}.g$

In their celebrated work, Bennett-DeVore-Sharpley \cite{bedesa} proposed the
following connection between real interpolation, weak interpolation, and $BMO$
(cf. \cite[page 384]{bs}). The $K-$functional for the pair $(L^{1},L^{\infty
})$ is given by (cf. \cite{bs} and Section \ref{secc:bgextra} below)%
\[
K(t,f;L^{1},L^{\infty})=\int_{0}^{t}f^{\ast}(s)ds.
\]
Therefore, $\frac{dK(t,f;L^{1},L^{\infty})}{dt}=K^{\prime}(t,f;L^{1}%
,L^{\infty})=f^{\ast}(t);$ consequently, we can compute the \textquotedblleft
norm" of weak $L^{1}:=L(1,\infty)$, as follows%
\begin{align*}
\left\Vert f\right\Vert _{L(1,\infty)}  &  =\sup_{t>0}tf^{\ast}(t)\\
&  =\sup_{t>0}tK^{\prime}(t,f;L^{1},L^{\infty}).
\end{align*}
Then, in analogy with the definition of weak $L^{1},$ Bennett-DeVore-Sharpley
proceeded to define $L(\infty,\infty)$ using the functional%
\begin{equation}
\left\Vert f\right\Vert _{L(\infty,\infty)}:=\sup_{t>0}tK^{\prime
}(t,f;L^{\infty},L^{1}). \label{berta4}%
\end{equation}
Note that in (\ref{berta4}) the order of the spaces is reversed in the
computation of the $K-$functional. These two different $K-$functionals are
connected by the equation%
\begin{equation}
K(t,f;L^{\infty},L^{1})=tK(\frac{1}{t},f;L^{1},L^{\infty}). \label{ladada}%
\end{equation}
Inserting (\ref{ladada}) in (\ref{berta4}) we readily see that%
\begin{align*}
\left\Vert f\right\Vert _{L(\infty,\infty)}  &  =\sup_{t>0}\{tK(\frac{1}%
{t},f;L^{1},L^{\infty})-K^{\prime}(\frac{1}{t},f;L^{1},L^{\infty})\}\\
&  =\sup_{t>0}\{\frac{K(t,f;L^{1},L^{\infty})}{t}-K^{\prime}(t,f;L^{1}%
,L^{\infty})\}\\
&  =\sup_{t}\{f^{\ast\ast}(t)-f^{\ast}(t)\}.
\end{align*}

The oscillation operator, $f\rightarrow f^{\ast\ast}(t)-f^{\ast}(t),$ turns
out to play an important role in other fundamental inequalities in analysis. A
recent remarkable application of the oscillation operator provides the sharp
form of the Hardy-Littlewood-Sobolev-O'Neil inequality up the borderline end
point $p=n.$ Indeed, if we let%
\begin{equation}
\left\Vert f\right\Vert _{L(p,q)}=\left\{
\begin{array}
[c]{cc}%
\left\{  \int_{0}^{\infty}\left(  f^{\ast}(t)t^{1/p}\right)  ^{q}\frac{dt}%
{t}\right\}  ^{1/q} & 1\leq p<\infty,1\leq q\leq\infty\\
\left\Vert f\right\Vert _{L(\infty,q)} & 1\leq q\leq\infty,
\end{array}
\right.  \label{berta}%
\end{equation}
where\footnote{Apparently the $L(\infty,q)$ spaces for $q<\infty$ were first
introduced and their usefulness shown in \cite{bmr}. Note that with the usual
definition $L(\infty,\infty)$ would be $L^{\infty},$ and $L(\infty,q)=\{0\},$
for $q<\infty.$ The key point here is that the use of the oscillation operator
introduces cancellations that make the spaces defined in this fashion
non-trivial (cf. Section \ref{secc:coordenadas}, Example \ref{ejemplomarkao}%
).}%
\begin{equation}
\left\Vert f\right\Vert _{L(\infty,q)}:=\left\{  \int_{0}^{\infty}(f^{\ast
\ast}(t)-f^{\ast}(t))^{q}\frac{dt}{t}\right\}  ^{1/q}, \label{berta1}%
\end{equation}
then it was shown in \cite{bmr} that
\begin{equation}
\left\Vert f\right\Vert _{L(\bar{p},q)}\leq c_{n}\left\Vert \nabla
f\right\Vert _{L(p,q)},1\leq p\leq n,\frac{1}{\bar{p}}=\frac{1}{p}-\frac{1}%
{n},\text{ }f\in C_{0}^{\infty}(R^{n}). \label{sobolev}%
\end{equation}
The Sobolev inequality (\ref{sobolev}) is best possible, and for $p=q=n$ it
improves on the end point result of
Brezis-Wainger-Hanson-Maz'ya\footnote{which in turn improves upon the
classical exponential integrability result by Trudinger \cite{tru}.} (cf.
\cite{brez}, \cite{ha}, \cite{maz}) much as the Bennett-DeVore-Sharpley
inequality (\ref{linfty}) improves upon (\ref{linfty0}). The improvement over
*best possible results* is feasible because, once again, the spaces that
correspond to $p=n,$ i.e.
\[
L(\infty,q)=\{f:\left\Vert f\right\Vert _{L(\infty,q)}<\infty\},
\]
are not necessarily linear\footnote{Let $X$ be a rearrangement invariant
space, Pustylnik \cite{pus} has given necessary and sufficient conditions for
spaces of functions defined by conditions of the form%
\[
\left\Vert \left(  f^{\ast\ast}-f^{\ast}\right)  t^{-\gamma}\right\Vert
_{X}<\infty
\]
to be linear and normable.}!

Moreover, the Sobolev inequality (\ref{sobolev}) persists up to higher order
derivatives\footnote{The improvement is also valid for Besov space
inequalities as well (cf. \cite{mamipams}).}, as was shown in \cite{milpu},%
\[
\left\Vert f\right\Vert _{L(\bar{p},q)}\leq c_{n}\left\Vert \nabla
^{k}f\right\Vert _{L(p,q)},1\leq p\leq\frac{n}{k},\frac{1}{\bar{p}}=\frac
{1}{p}-\frac{k}{n},\text{ }f\in C_{0}^{\infty}(R^{n}).
\]
In particular, when $p=\frac{n}{k}$ and $q=\infty,$ we have the $BMO$ type
result\footnote{The spaces $L(\infty,q)$ allow to interpolate between
$L^{\infty}=L(\infty,1)$ and $L(\infty,\infty)\subset e^{L}.$}%
\[
\left\Vert f\right\Vert _{L(\infty,\infty)}\leq c\left\Vert \nabla
^{k}f\right\Vert _{L(\frac{n}{k},\infty)},f\in C_{0}^{\infty}(R^{n}).
\]

Using the space $L(\infty,\infty)$ one can improve (\ref{intro1}) as follows
(cf. \cite{kowa})%
\begin{equation}
\left\Vert f\right\Vert _{L^{q}}\leq C_{n}q\left\Vert f\right\Vert _{L^{p}%
}^{p/q}\left\Vert f\right\Vert _{L(\infty,\infty)}^{1-p/q},1\leq p<q<\infty.
\label{lageneral}%
\end{equation}

In my work with Jawerth\footnote{The earlier work of Herz \cite{herz} and
Holmstedt \cite{holm}, that precedes \cite{bedesa}, should be also mentioned
here.} (cf. \cite{jm1}) we give a somewhat different interpretation of the
$L(\infty,q)$ spaces using Gagliardo diagrams (cf. \cite{bl}, \cite{cwbjmm0});
this point of view turns out to be useful to explain other applications of the
oscillation operator $f^{\ast\ast}(t)-f^{\ast}(t)$ (cf. \cite{cwbjmm},
\cite{mamicoc} and Section \ref{secc:uses}). The idea behind the approach in
\cite{jm1} is that of an \textquotedblleft optimal decomposition", which also
makes it possible to incorporate the $L(\infty,q)$ spaces into the abstract
theory of real interpolation, as we shall show below.

Let $t>0,$ and let $f\in\Sigma(\vec{X})=X_{1}+X_{2}.$ Out of all the competing
decompositions for the computation of $K(t,f;\vec{X}),$ an
optimal\footnote{Optimal decompositions exist for $a.e.$ $t>0.$}
decomposition
\[
f=D_{1}(t)f+D_{2}(t)f,\text{ with }D_{i}(t)f\in X_{i},i=1,2,
\]
satisfies%
\begin{equation}
K(t,f;\vec{X})=\left\Vert D_{1}(t)f\right\Vert _{X_{1}}+t\left\Vert
D_{2}(t)f\right\Vert _{X_{2}}, \label{k3}%
\end{equation}
(resp. a nearly optimal decomposition obtains if in (\ref{k3}) we replace $=$
by $\approx).$ For an optimal decomposition of $f$ we
have\footnote{Interpreting $(\left\Vert D_{1}(t)f\right\Vert _{X_{1}%
},\left\Vert D_{2}(t)f\right\Vert _{X_{2}})$ as coordinates on the boundary of
a Gagliardo diagram (cf. \cite{bl}) it follows readily that, for all
$\varepsilon>0,t>0,$ we can find nearly optimal decompositions
$x=x_{\varepsilon}(t)+y_{\varepsilon}(t),$ such that%
\begin{equation}
(1-\varepsilon)[K(t,f;\vec{X})-t\frac{d}{dt}K(t,f;\vec{X})]\leq\left\Vert
x_{\varepsilon}(t)\right\Vert _{X_{1}}\leq(1+\varepsilon)[K(t,f;\vec
{X})-t\frac{d}{dt}K(t,f;\vec{X})] \label{a1'}%
\end{equation}%
\begin{equation}
(1-\varepsilon)\frac{d}{dt}K(t,f;\vec{X})\leq\left\Vert y_{\varepsilon
}(t)\right\Vert _{X_{2}}\leq(1+\varepsilon)\frac{d}{dt}K(t,f;\vec{X}).
\label{a2'}%
\end{equation}
} (cf. \cite{holm}, \cite{jm1})
\begin{equation}
\left\Vert D_{1}(t)f\right\Vert _{X_{1}}=K(t,f;\vec{X})-t\frac{d}%
{dt}K(t,f;\vec{X}); \label{a1}%
\end{equation}%
\begin{equation}
\left\Vert D_{2}(t)f\right\Vert _{X_{2}}=\frac{d}{dt}K(t,f;\vec{X}).
\label{a2}%
\end{equation}

In particular, for the pair $(L^{1},L^{\infty}),$ we have (cf. Section
\ref{secc:bgextra}),%
\begin{align*}
\left\Vert D_{1}(t)f\right\Vert _{L^{1}}  &  =K(t,f;L^{1},L^{\infty}%
)-t\frac{d}{dt}K(t,f;L^{1},L^{\infty})\\
&  =tf^{\ast\ast}(t)-tf^{\ast}(t),
\end{align*}
and%
\[
\left\Vert D_{2}(t)f\right\Vert _{L^{\infty}}=\frac{d}{dt}K(t,f;L^{1}%
,L^{\infty})=f^{\ast}(t).
\]
Thus, reinterpreting optimal decompositions using Gagliardo diagrams (cf.
\cite{holm}, \cite{jm1}), one is led to consider spaces, which could be
referred to as \textquotedblleft Gagliardo coordinate spaces". These spaces
coincide with the Lions-Peetre real interpolation spaces, for the usual range
of the parameters (cf. \cite{holm}), but they also make sense at the end
points (cf. \cite{jm1}), and in this fashion they can be used to complete the
Lions-Peetre scale much as the generalized $L(p,q)$ spaces, defined by
(\ref{berta}), complete the classical scale of Lorentz spaces. The
\textquotedblleft Gagliardo coordinate spaces" $\vec{X}_{\theta,q}%
^{(i)},i=1,2,$ are formally obtained replacing $K(t,f;\vec{X}),$ in the
definition of the $\vec{X}_{\theta,q}$ norm of $f$ (cf. (\ref{arriba})
above)$,$ by $\left\Vert D_{1}(t)f\right\Vert _{X_{1}}$ (resp. $\left\Vert
D_{2}(t)f\right\Vert _{X_{2}})$ (cf. \cite{holm})$.$ In particular, we note
that the $\vec{X}_{\theta,q}^{(2)}$ spaces correspond to the $k$-spaces
studied by Bennett \cite{beka}.

This point of view led us to formulate and prove the following general version
of (\ref{rates}) (cf. Theorem \ref{teomarkao} below)%
\begin{equation}
\left\Vert f\right\Vert _{\vec{X}_{\theta,q(\theta)}^{(2)}}\leq cq\left\Vert
f\right\Vert _{X_{1}}^{1-\theta}\left\Vert f\right\Vert _{\vec{X}_{1,\infty
}^{(1)}}^{\theta},\theta\in(0,1),1-\theta=\frac{1}{q(\theta)}, \label{berta3}%
\end{equation}
which can be easily seen to imply an abstract extrapolation theorem connected
with the John-Nirenberg inequality.

Underlying these developments is the following computation of the
$K-$functional for the pair $(L^{1},BMO)$ given in \cite{besa} (cf. Section
\ref{secc:self} below)
\begin{equation}
K(t,f,L^{1}(R^{n}),BMO(R^{n}))\approx tf^{\#\ast}(t), \label{obama}%
\end{equation}
where $f^{\#}$ is the sharp maximal function of Fefferman-Stein \cite{fest}
(cf. (\ref{veaabajo}) below). In this calculation $BMO(R^{n})$ is provided
with the seminorm\footnote{$BMO(R^{n})$ can be normed by $\left\vert
f\right\vert _{BMO}$ if we identify functions that differ by a constant.} (cf.
Section \ref{secc:abajo})%
\[
\left\vert f\right\vert _{BMO}=\left\Vert f_{R^{n}}^{\#}\right\Vert
_{L^{\infty}}.
\]
In \cite{chenzu}, the authors show that (\ref{intro1}) can be used to give a
strikingly \textquotedblleft easy" proof of an inequality first proved in
\cite{kowa1} using paraproducts,%
\begin{equation}
\left\Vert fg\right\Vert _{L^{p}}\leq c(\left\Vert f\right\Vert _{L^{p}%
}\left\Vert g\right\Vert _{BMO}+\left\Vert g\right\Vert _{L^{p}}\left\Vert
f\right\Vert _{BMO}),1<p<\infty. \label{intro3}%
\end{equation}
The argument to prove (\ref{intro3}) given in \cite{chenzu} has a general
character and with suitable modifications (in particular, using the
*reiteration theorem* of interpolation theory) the idea\footnote{We cannot
resist but to offer here our slight twist to the argument%
\begin{align*}
\left\Vert fg\right\Vert _{L^{p}}  &  \leq\left\Vert f\right\Vert _{L^{2p}%
}\left\Vert g\right\Vert _{L^{2p}}\\
&  =\left\Vert f\right\Vert _{[L^{p},BMO]_{1/2,2p}}\left\Vert g\right\Vert
_{[L^{p},BMO]_{1/2,2p}}\\
&  \preceq\left\Vert f\right\Vert _{L^{p}}^{1/2}\left\Vert f\right\Vert
_{BMO}^{1/2}\left\Vert g\right\Vert _{L^{p}}^{1/2}\left\Vert g\right\Vert
_{BMO}^{1/2}\\
&  \leq\left\Vert f\right\Vert _{L^{p}}\left\Vert g\right\Vert _{BMO}%
+\left\Vert g\right\Vert _{L^{p}}\left\Vert f\right\Vert _{BMO}.
\end{align*}
} can be combined with (\ref{berta3}) to yield a new end point result for
bilinear interpolation for generalized product and convolution operators of
O'Neil type acting on interpolation scales (see Section \ref{secc:bilinear}
below). Further, in Section \ref{secc:uses} we collect applications of the
methods discussed in the paper, and also offer some
suggestions\footnote{However, keep in mind the epigraph of \cite{restaurant},
originally due Douglas Adams, The Restaurant at the End of the Universe, Tor
Books, 1988 :\textquotedblleft For seven and a half million years, Deep
Thought computed and calculated, and in the end announced that the answer was
in fact Forty-two---and so another, even bigger, computer had to be built to
find out what the actual question was."} for further research. In particular,
in Section \ref{secc:singular} we give a new approach to well known results by
Bagby-Kurtz \cite{bagby}, Kurtz \cite{kurtz}, on the linear (in $p$) rate of
growth of $L^{p}$ estimates for certain singular integrals; in Section
\ref{secc:lambda} we discuss the connection between the classical good lambda
inequalities (cf. \cite{bg}, \cite{cof}) and the strong good lambda
inequalities of Bagby-Kurtz (cf. \cite{kurtz}), with inequalities for the
oscillation operator $f^{\ast\ast}-f^{\ast}$; in Section \ref{secc:bgextra} we
show how oscillation inequalities for Sobolev functions are connected with the
Gagliardo coordinate spaces and the property of commutation of the gradient
with optimal $(L^{1},L^{\infty})$ decompositions (cf. \cite{cwbjmm}), we also
discuss briefly the characterization of the isoperimetric inequality in terms
of rearrangement inequalities for Sobolev functions, in a very general context..

The intended audience for this note are, on the one hand, classical analysts
that may be curious on what abstract interpolation constructions could bring
to the table, and on the other hand, functional analysts, specializing in
interpolation theory, that may want to see applications of the abstract
theories. To balance these objectives I have tried to give a presentation full
of details in what respects to interpolation theory, and provide full
references to the background material needed for the applications to classical
analysis. In this respect, I have compiled a large set of references but the
reader should be warned that this paper is not intended to be a survey, and
that the list intends only to document the material that is mentioned in the
text and simply reflects my own research interests, point of view, and
limitations. In fact, many important topics dear to me had to be left out,
including *Garsia inequalities* (cf. \cite{garsiagrenoble}).

I close the note with some personal reminiscences of my friendship with Cora Sadosky.

\section{The John-Nirenberg Lemma and rearrangements\label{secc:abajo}}

In this section we recall a few basic definitions and results associated with
the self improving properties of $BMO$ functions\footnote{For more background
information we refer to \cite{bs} and \cite{cosa}.}. In particular, we discuss
the John-Nirenberg inequality (cf. \cite{johnN}). In what follows we always
let $Q$ denote a cube.

Let $Q_{0}$ be a fixed cube in $R^{n}$. For $x\in Q_{0},$ let%
\begin{equation}
f_{Q_{0}}^{\#}(x)=\sup_{x\backepsilon Q,Q\subset Q_{0}}\frac{1}{\left\vert
Q\right\vert }\int_{Q}\left\vert f-f_{Q}\right\vert dx,\text{ where }%
f_{Q}=\frac{1}{\left\vert Q\right\vert }\int_{Q}fdx. \label{veaabajo}%
\end{equation}
The space of functions of bounded mean oscillation, $BMO(Q_{0}),$ consists of
all the functions $f\in L^{1}(Q_{0})$ such that $f_{Q_{0}}^{\#}\in L^{\infty
}(Q_{0}).$ Generally, we use the seminorm%
\begin{equation}
\left\vert f\right\vert _{BMO(Q_{0})}=\left\Vert f_{Q_{0}}^{\#}\right\Vert
_{L^{\infty}}. \label{llamada}%
\end{equation}
The space $BMO(Q_{0})$ becomes a Banach space if we identify functions that
differ by a constant. Sometimes it is preferable for us to use
\[
\left\Vert f\right\Vert _{BMO(Q_{0})}=\left\vert f\right\vert _{BMO(Q_{0}%
)}+\left\Vert f\right\Vert _{L^{1}(Q_{0})}.
\]
The classical John-Nirenberg Lemma is reformulated in \cite[Corollary 7.7,
page 381]{bs} as follows\footnote{For a recent new approach to the
John-Nirenberg Lemma we refer to \cite{css}.}: Given a fixed cube
$Q_{0}\subset R^{n},$ there exists a constant $c>0,$ such that for all $f\in
BMO(Q_{0}),$ and for all subcubes $Q\subset Q_{0},$
\begin{equation}
\lbrack\left(  f-f_{Q}\right)  \chi_{Q}]^{\ast}(t)\leq c\left\vert
f\right\vert _{BMO(Q_{0})}\log^{+}(\frac{6\left\vert Q\right\vert }{t}),t>0.
\label{jn1}%
\end{equation}
In particular, $BMO$ has the following self improving property (cf.
\cite[Corollary 7.8, page 381]{bs}). Let $1\leq p<\infty,$ and
let\footnote{Note that $f_{Q_{0},1}^{\#}=f_{Q_{0}}^{\#}.$}%
\[
f_{Q_{0},p}^{\#}(x)=\sup_{x\backepsilon Q,Q\subset Q_{0}}\left\{  \frac
{1}{\left\vert Q\right\vert }\int_{Q}\left\vert f-f_{Q}\right\vert
^{p}dx\right\}  ^{1/p},
\]
and
\[
\left\Vert f\right\Vert _{BMO^{p}(Q_{0})}=\left\Vert f_{Q_{0},p}%
^{\#}\right\Vert _{L^{\infty}}+\left\Vert f\right\Vert _{L^{p}(Q_{0})}.
\]
Then, with constants independent of $f,$%
\[
\left\Vert f\right\Vert _{BMO^{p}(Q_{0})}\approx\left\Vert f\right\Vert
_{BMO(Q_{0})}.
\]
It follows that, for all $p<\infty,$
\[
BMO(Q_{0})\subset L^{p}(Q_{0}).
\]
Actually, from \cite{jm} we have%
\[
\left\Vert \left(  f-f_{Q}\right)  \chi_{Q}\right\Vert _{\Delta(\frac
{L^{p}(Q)}{p})}\approx\sup_{t}\frac{[\left(  f-f_{Q}\right)  \chi_{Q}]^{\ast
}(t)}{\log^{+}(\frac{6\left\vert Q\right\vert }{t})}\approx\left\Vert
f-f_{Q}\right\Vert _{e^{L(Q)}},
\]
which combined with (\ref{jn1}) gives%
\[
\left\Vert f-f_{Q}\right\Vert _{e^{L(Q)}}\leq c\left\vert f\chi_{Q}\right\vert
_{BMO(Q)},
\]
and therefore (cf. \cite{bs})
\[
BMO(Q_{0})\subset e^{L(Q_{0})}.
\]
In other words, the functions in $BMO(Q_{0})$ are exponentially integrable.

The previous results admit suitable generalizations to $R^{n}$ and more
general measure spaces.

\section{Interpolation theory: some basic inequalities\label{secc:inter}}

In this section we review basic definitions, and discuss inequalities of the
form (\ref{intro2}) that are associated with the classical methods of interpolation.

The starting objects of interpolation theory are pairs $\vec{X}=(X_{1},X_{2})$
of Banach spaces that are \textquotedblleft compatible\textquotedblright, in
the sense that both spaces are continuously embedded in a common Hausdorff
topological vector space $V$\footnote{We shall then call $\vec{X}=(X_{0}%
,X_{1})$ a \textquotedblleft Banach pair\textquotedblright. In general, the
space $V$ plays an auxiliary role, since once we know that $\vec{X}$ is a
Banach pair we can use $\Sigma(\vec{X})$ as the ambient space. In particular,
the functional $K(t,f;\vec{X})$ is in principle only defined on $\Sigma
(\vec{X}).$ On the other hand, the functional $f\rightarrow\frac{d}%
{dt}K(t,f;\vec{X}),$ can make sense for a larger class of elements than
$\Sigma(\vec{X}).$ This occurs for significant examples: For example, on the
interval $[0,1],$
\[
L(1,\infty)=\{f:\sup_{t}tf^{\ast}(t)<\infty\}\nsubseteq L^{1}+L^{\infty}%
=L^{1}.
\]
}. In real interpolation we consider two basic functionals, the $K-$%
functional, already introduced in (\ref{k1}), associated with the construction
of the sum space $\Sigma(\vec{X})=X_{1}+X_{2},$ and its counterpart, the
$J-$functional, defined on the intersection space $\Delta(\vec{X})=X_{1}\cap
X_{2},$ by
\begin{equation}
J(t,f;\vec{X}):=J(t,f;X_{1},X_{2})=\max\left\{  \left\Vert f\right\Vert
_{X_{1}},t\left\Vert f\right\Vert _{X_{2}}\right\}  ,t>0. \label{j1}%
\end{equation}
The $K-$functional is used to construct the interpolation spaces $(X_{1}%
,X_{2})_{\theta,q}$ (cf. (\ref{arriba}) above). Likewise, associated with the
$J-$functional we have the $(X_{1},X_{2})_{\theta,q;J},$ spaces. Let
$\theta\in(0,1),1\leq q\leq\infty;$ and let $U_{\theta,q}$ be the class of
functions $u:(0,\infty)\rightarrow\Delta(\vec{X}),$ such that\footnote{with
the usual modification when $q=\infty.$} $\left\Vert u\right\Vert
_{U_{\theta,q}}=\left\{  \int_{0}^{\infty}(t^{-\theta}J(t,u(t);X_{1}%
,X_{2}))^{q}\frac{dt}{t}\right\}  ^{1/q}<\infty$. The space $(X_{1}%
,X_{2})_{\theta,q;J}$ consists of elements $f\in$ $X_{1}+X_{2},$ such that
there exists $u\in U_{\theta,q},$ with%
\[
f=\int_{0}^{\infty}u(s)\frac{ds}{s}\text{ (in }X_{1}+X_{2}),
\]
provided with the norm%
\[
\left\Vert f\right\Vert _{(X_{1},X_{2})_{\theta,q;J}}=\inf_{f=\int_{0}%
^{\infty}u(s)\frac{ds}{s}}\{\left\Vert u\right\Vert _{U_{\theta,q}}\}.
\]
A basic result in this context is that the two constructions give the same
spaces (*the equivalence theorem*) (cf. \cite{bl})%
\[
(X_{1},X_{2})_{\theta,q}=(X_{1},X_{2})_{\theta,q;J},
\]
where the constants of norm equivalence depend only on $\theta$ and $q.$

In practice the $J-$method is harder to compute, but nevertheless plays an
important theoretical role. In particular, the following interpolation
property holds for the $J-$method. If $X$ is a Banach space intermediate
between $X_{1}$ and $X_{2},$ in the sense that $\Delta(\vec{X})\subset
X\subset\Sigma(\vec{X}),$ then an inequality of the form%
\begin{equation}
\left\Vert f\right\Vert _{X}\leq\left\Vert f\right\Vert _{X_{1}}^{1-\theta
}\left\Vert f\right\Vert _{X_{2}}^{\theta},\text{ for some fixed }\theta
\in(0,1),\text{ and for all }f\in\Delta(\vec{X}), \label{propiedadj}%
\end{equation}
is equivalent to
\begin{equation}
\left\Vert f\right\Vert _{X}\leq\left\Vert f\right\Vert _{(X_{1}%
,X_{2})_{\theta,1;J}},\text{ for all }f\in(X_{1},X_{2})_{\theta,1;J}.
\label{propiedaj1}%
\end{equation}
One way to see this equivalence is to observe that (cf. \cite{bs})

\begin{lemma}
\label{lemaviva}Let $\theta\in(0,1).$ Then, for all $f\in X_{1}\cap X_{1},$%
\begin{equation}
\left\Vert f\right\Vert _{X_{1}}^{1-\theta}\left\Vert f\right\Vert _{X_{2}%
}^{\theta}=\inf_{t>0}\{t^{-\theta}J(t,f;X_{1},X_{2})\}. \label{lect14}%
\end{equation}

\end{lemma}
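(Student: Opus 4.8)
The identity to prove is $\|f\|_{X_1}^{1-\theta}\|f\|_{X_2}^{\theta}=\inf_{t>0}\{t^{-\theta}J(t,f;X_1,X_2)\}$, where $J(t,f;X_1,X_2)=\max\{\|f\|_{X_1},t\|f\|_{X_2}\}$. The plan is to treat this as a one–variable optimization problem over $t>0$ and simply compute the infimum of $g(t):=t^{-\theta}\max\{\|f\|_{X_1},t\|f\|_{X_2}\}$. I would first dispose of the trivial cases: if $f=0$ both sides are $0$; if $\|f\|_{X_2}=0$ but $\|f\|_{X_1}\neq 0$ then $f\notin X_2$ in the genuine sense and the statement is vacuous for $f\in\Delta(\vec X)$, so we may assume both norms are strictly positive (the case $\|f\|_{X_1}=0$ is symmetric). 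Then $g$ is continuous on $(0,\infty)$, equal to $t^{-\theta}\|f\|_{X_1}$ for $t\le t_0:=\|f\|_{X_1}/\|f\|_{X_2}$ and equal to $t^{1-\theta}\|f\|_{X_2}$ for $t\ge t_0$.

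The key observation is that on $(0,t_0]$ the function $t\mapsto t^{-\theta}\|f\|_{X_1}$ is strictly decreasing (since $-\theta<0$), while on $[t_0,\infty)$ the function $t\mapsto t^{1-\theta}\|f\|_{X_2}$ is strictly increasing (since $1-\theta>0$); hence the minimum of $g$ is attained precisely at the breakpoint $t=t_0$. Evaluating, $g(t_0)=t_0^{-\theta}\|f\|_{X_1}=\bigl(\|f\|_{X_1}/\|f\|_{X_2}\bigr)^{-\theta}\|f\|_{X_1}=\|f\|_{X_1}^{1-\theta}\|f\|_{X_2}^{\theta}$, which is exactly the right-hand side. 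This gives both inequalities at once: $\inf_t g(t)\le g(t_0)$ yields "$\le$" in one direction, and the monotonicity argument shows $g(t)\ge g(t_0)$ for every $t$, giving the reverse.

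There is essentially no obstacle here — the lemma is a routine exercise, and the only thing to be careful about is the degenerate cases where one of the norms vanishes (handled above by restricting attention, as the statement does, to $f\in\Delta(\vec X)$ with both norms positive, the remaining case being symmetric or trivial). One may alternatively phrase the computation without cases by writing, for any $t>0$, $t^{-\theta}J(t,f;\vec X)=\max\{t^{-\theta}\|f\|_{X_1},\,t^{1-\theta}\|f\|_{X_2}\}\ge (t^{-\theta}\|f\|_{X_1})^{1-\theta}(t^{1-\theta}\|f\|_{X_2})^{\theta}=\|f\|_{X_1}^{1-\theta}\|f\|_{X_2}^{\theta}$, using $\max\{a,b\}\ge a^{1-\theta}b^{\theta}$ for $a,b\ge 0$; this proves "$\ge$" for all $t$, and the choice $t=t_0$ shows equality is achieved, proving "$\le$". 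I would likely present this second, slicker version in the write-up.
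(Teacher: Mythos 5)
Your proof is correct; the paper does not supply its own argument for Lemma \ref{lemaviva}, referring the reader instead to \cite{bs}, and both versions you give (the piecewise monotonicity at the breakpoint $t_0=\|f\|_{X_1}/\|f\|_{X_2}$, and the one-line bound $\max\{a,b\}\ge a^{1-\theta}b^{\theta}$ with equality at $t_0$) are the standard elementary calculation. One minor slip in your treatment of the degenerate case: for $f\in X_1\cap X_2$ with genuine Banach norms, $\|f\|_{X_2}=0$ already forces $f=0$ in the ambient space, hence $\|f\|_{X_1}=0$ as well, so the scenario you dismiss ($\|f\|_{X_2}=0$ while $\|f\|_{X_1}\neq 0$) cannot actually arise on $\Delta(\vec X)$ — but your conclusion, that one may assume both norms strictly positive, is the right one and the rest of the argument goes through unchanged.
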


The preceding discussion shows that, in particular,%
\[
\left\Vert f\right\Vert _{(X_{1},X_{2})_{\theta,1,J}}\leq\left\Vert
f\right\Vert _{X_{1}}^{1-\theta}\left\Vert f\right\Vert _{X_{2}}^{\theta
},0<\theta<1,\text{ for }f\in X_{1}\cap X_{2}.
\]
More generally (cf. \cite{jm}, \cite{milta}) we have\footnote{Here and in what
follows we use the convention $\infty^{0}=1.$}: for all $f\in X_{1}\cap
X_{2},$
\begin{equation}
\left\Vert f\right\Vert _{(X_{1},X_{2})_{\theta,q,J}}\leq\left(
(1-\theta)\theta q^{\prime}\right)  ^{-1/q^{\prime}}\left\Vert f\right\Vert
_{X_{1}}^{1-\theta}\left\Vert f\right\Vert _{X_{2}}^{\theta},0<\theta<1,\text{
1}\leq q\leq\infty, \label{constantesj}%
\end{equation}
where $\frac{1}{q}+\frac{1}{q^{\prime}}=1.$

Likewise, for the complex method of interpolation of Calder\'{o}n,
$[.,.]_{\theta},$ we have (cf. \cite{ca}),
\[
\left\Vert f\right\Vert _{[X_{1},.X_{2}]_{\theta}}\leq\left\Vert f\right\Vert
_{X_{1}}^{1-\theta}\left\Vert f\right\Vert _{X_{2}}^{\theta},0<\theta<1,\text{
for }f\in X_{1}\cap X_{2}.
\]
For the \textquotedblleft$K$" method we also have the following result
implicit\footnote{See also \cite{cwja}.} in \cite{jm}, which we prove for sake
of completeness.

\begin{lemma}
\label{marcaada}%
\begin{equation}
\lbrack(1-\theta)\theta q]^{1/q}\left\Vert f\right\Vert _{(X_{1}%
,X_{2})_{\theta,q}}\leq\left\Vert f\right\Vert _{X_{0}}^{1-\theta}\left\Vert
f\right\Vert _{X_{1}}^{\theta},f\in X_{1}\cap X_{2},0<\theta<1. \label{jam1}%
\end{equation}

\end{lemma}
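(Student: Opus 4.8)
The plan is to prove the estimate \eqref{jam1} directly from the definition of the $K$-functional norm, exploiting two elementary properties of $K(t,f;\vec X)$: it is nondecreasing in $t$, and $K(t,f;\vec X)/t$ is nonincreasing in $t$. Fix $f\in X_1\cap X_2$ and write $\lambda = \|f\|_{X_1}^{1-\theta}\|f\|_{X_2}^{\theta}$. The idea is to break the integral defining $\|f\|_{(X_1,X_2)_{\theta,q}}^q = \int_0^\infty (t^{-\theta}K(t,f;\vec X))^q\,\tfrac{dt}{t}$ at a well-chosen cutoff $t_0$, namely $t_0 = \|f\|_{X_1}/\|f\|_{X_2}$, which is the point where the two trivial bounds $K(t,f;\vec X)\le \|f\|_{X_1}$ and $K(t,f;\vec X)\le t\|f\|_{X_2}$ cross.

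First I would record the two one-sided bounds. On $(0,t_0)$ use $K(t,f;\vec X)\le t\|f\|_{X_2}$, so that $t^{-\theta}K(t,f;\vec X)\le t^{1-\theta}\|f\|_{X_2}$; then $\int_0^{t_0}(t^{1-\theta}\|f\|_{X_2})^q\,\tfrac{dt}{t} = \|f\|_{X_2}^q \int_0^{t_0} t^{(1-\theta)q}\,\tfrac{dt}{t} = \|f\|_{X_2}^q \,\tfrac{t_0^{(1-\theta)q}}{(1-\theta)q}$. On $(t_0,\infty)$ use $K(t,f;\vec X)\le \|f\|_{X_1}$, so $t^{-\theta}K(t,f;\vec X)\le t^{-\theta}\|f\|_{X_1}$, giving $\int_{t_0}^{\infty}(t^{-\theta}\|f\|_{X_1})^q\,\tfrac{dt}{t} = \|f\|_{X_1}^q\,\tfrac{t_0^{-\theta q}}{\theta q}$. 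Now substitute $t_0 = \|f\|_{X_1}/\|f\|_{X_2}$: the first term becomes $\tfrac{1}{(1-\theta)q}\,\|f\|_{X_1}^{(1-\theta)q}\|f\|_{X_2}^{\theta q} = \tfrac{\lambda^q}{(1-\theta)q}$, and the second becomes $\tfrac{1}{\theta q}\,\|f\|_{X_1}^{(1-\theta)q}\|f\|_{X_2}^{\theta q} = \tfrac{\lambda^q}{\theta q}$. Adding, $\|f\|_{(X_1,X_2)_{\theta,q}}^q \le \lambda^q\bigl(\tfrac{1}{(1-\theta)q}+\tfrac{1}{\theta q}\bigr) = \lambda^q\,\tfrac{1}{(1-\theta)\theta q}$, and taking $q$-th roots gives $[(1-\theta)\theta q]^{1/q}\|f\|_{(X_1,X_2)_{\theta,q}}\le \lambda$, which is exactly \eqref{jam1}. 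For $q=\infty$ the same cutoff argument gives $\sup_t t^{-\theta}K(t,f;\vec X)\le \max(\sup_{t<t_0} t^{1-\theta}\|f\|_{X_2}, \sup_{t>t_0} t^{-\theta}\|f\|_{X_1}) = \lambda$, consistent with the convention $\infty^0=1$; one can also simply invoke Lemma \ref{lemaviva} together with $K(t,f;\vec X)\le J(t,f;\vec X)$ for $f\in\Delta(\vec X)$.

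The only genuine point requiring care — the "main obstacle," though it is mild — is justifying the two monotonicity/bound facts used to split the integral: that $K(t,f;\vec X)\le \min\{\|f\|_{X_1}, t\|f\|_{X_2}\}$ (immediate from the trivial decompositions $f=f+0$ and $f=0+f$ in \eqref{k1}), and implicitly that the integrals over $(0,t_0)$ and $(t_0,\infty)$ are the only contributions, which is just additivity of the integral. No convexity of $K$ is actually needed for this one-sided estimate; the sharp crossover point $t_0=\|f\|_{X_1}/\|f\|_{X_2}$ is what makes the two pieces balance and produces the clean constant $[(1-\theta)\theta q]^{-1/q}$. I would present the computation in a single short displayed chain of inequalities to keep it self-contained, exactly in the spirit of the "sake of completeness" remark preceding the lemma.
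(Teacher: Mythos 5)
Your argument is correct and coincides with the paper's proof in every essential respect: both split the $K$-functional integral at the crossover point $t_{0}=\Vert f\Vert_{X_{1}}/\Vert f\Vert_{X_{2}}$, apply the trivial bound $K(t,f;\vec X)\leq\min\{\Vert f\Vert_{X_{1}},\,t\Vert f\Vert_{X_{2}}\}$ coming from the decompositions $f=f+0$ and $f=0+f$, and evaluate the two resulting elementary integrals to produce the factor $[(1-\theta)\theta q]^{-1/q}$. The only addition beyond the paper's proof is your explicit remark on the $q=\infty$ case, which is a harmless completeness check.
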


\begin{proof}
Let $f\in X_{1}\cap X_{2}$. Using decompositions of the form $f=f+0,$ or
$f=0+f$, we readily see that
\[
K(f,t;X_{1},X_{2})\leq\min\{\Vert f\Vert_{X_{1}},t\Vert f\Vert_{X_{2}}\}.
\]
Therefore,%
\begin{align*}
\Vert f\Vert_{(X_{1},X_{2})_{\theta,q}}  &  =\left(  \int_{0}^{\infty
}[t^{-\theta}K(f,t;X_{1},X_{2})]^{q}\frac{dt}{t}\right)  ^{1/q}\\
&  \leq\left(  \int_{0}^{\Vert f\Vert_{X_{1}}/\Vert f\Vert_{X_{2}}}%
[t^{-\theta+1}\Vert f\Vert_{X_{2}}]^{q}\frac{dt}{t}+\int_{\Vert f\Vert_{X_{1}%
}/\Vert f\Vert_{X_{2}}}^{\infty}[t^{-\theta}\Vert f\Vert_{X_{1}}]^{q}\frac
{dt}{t}\right)  ^{1/q}\\
&  =\left(  \Vert f\Vert_{X_{2}}^{q}\left(  \frac{\Vert f\Vert_{X_{1}}}{\Vert
f\Vert_{X_{2}}}\right)  ^{q(1-\theta)}\frac{1}{(1-\theta)q}+\Vert
f\Vert_{X_{1}}^{q}\left(  \frac{\Vert f\Vert_{X_{1}}}{\Vert f\Vert_{X_{2}}%
}\right)  ^{-\theta q}\frac{1}{\theta q}\right)  ^{1/q}\\
&  =\Vert f\Vert_{X_{1}}^{1-\theta}\Vert f\Vert_{X_{2}}^{\theta}\left(
\frac{1}{(1-\theta)q}+\frac{1}{\theta q}\right)  ^{1/q}\\
&  =[(1-\theta)\theta q]^{-1/q}\Vert f\Vert_{X_{1}}^{1-\theta}\Vert
f\Vert_{X_{2}}^{\theta}.
\end{align*}

\end{proof}

For more results related to this section we refer to \cite{jm}, \cite{milta}
and \cite{kamixi}.

\section{Self Improving properties of $BMO$ and interpolation\label{secc:self}%
}

The purpose of this section is to provide a new proof of (\ref{intro1}) using
interpolation tools.

One the first results obtained concerning interpolation properties of $BMO$ is
the following (cf. \cite{fest}, \cite{bs} and the references therein)%
\begin{equation}
\left[  L^{1},BMO\right]  _{\theta}=L^{q},\text{ with }\frac{1}{1-\theta}=q.
\label{bmo2}%
\end{equation}
In particular, it follows that%
\[
L^{1}\cap BMO\subset L^{q}.
\]
Therefore, if we work on a cube $Q_{0}$, we have
\[
L^{1}(Q_{0})\cap BMO(Q_{0})=BMO(Q_{0})\subset L^{q}(Q_{0}).
\]
In other words, the following self-improvement holds
\[
f\in BMO(Q_{0})\Rightarrow f\in%
{\displaystyle\bigcap\limits_{q\geq1}}
L^{q}(Q_{0}).
\]
While it is not true that $f\in BMO(Q_{0})\Rightarrow f\in L^{\infty}(Q_{0}),$
we can quantify precisely the deterioration of the $L^{q}$ norms of a function
in $BMO(Q_{0}),$ to be able to conclude by extrapolation that
\begin{equation}
f\in BMO(Q_{0})\Rightarrow f\in e^{L(Q_{0})}. \label{expo}%
\end{equation}
Let us go over the details. First, consider the following inequality
attributed to Chen-Zhu \cite{chenzu}.

\begin{lemma}
\label{marcao}Let $f\in BMO(Q_{0}),$ and let $1\leq p<\infty.$ Then, there
exists an absolute constant that depends only on $n$ and $p,$ such that for
all $q>p,$%
\begin{equation}
\left\Vert f\right\Vert _{L^{q}}\leq C_{n}q\left\Vert f\right\Vert _{L^{p}%
}^{p/q}\left\Vert f\right\Vert _{BMO}^{1-p/q}. \label{bmo1}%
\end{equation}

\end{lemma}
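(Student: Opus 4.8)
The natural strategy is to run the chain of interpolation inequalities set up in the preceding sections. The key ingredients are: (i) the $K$-functional identity $K(t,f;L^1,BMO)\approx t f^{\#*}(t)$ from \eqref{obama}; (ii) the interpolation identification $[L^1,BMO]_\theta = L^q$ with $1/(1-\theta)=q$ from \eqref{bmo2}, which we will want in its real-method incarnation $(L^1,BMO)_{\theta,q}=L^q$ (with norm constants tracked in $\theta$, or equivalently in $q$); (iii) the rate inequality \eqref{rates}/\eqref{jam1} of Lemma \ref{marcaada}, namely $[(1-\theta)\theta q]^{1/q}\|f\|_{(X_1,X_2)_{\theta,q}}\le \|f\|_{X_1}^{1-\theta}\|f\|_{X_2}^{\theta}$; and (iv) the reiteration theorem, to handle the fact that we start from $L^p$ rather than $L^1$.

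First I would reduce to the case $p=1$ by reiteration. Writing $L^q$ as a real interpolation space with endpoints $L^p$ and $BMO$: since $L^p = (L^1,BMO)_{\theta_0,p}$ for the appropriate $\theta_0$ and $BMO$ sits at the far end, the reiteration formula gives $(L^p,BMO)_{\eta,r}=(L^1,BMO)_{\theta,r}$ for $\theta$ the convex combination, so $L^q=(L^p,BMO)_{\eta,q}$ with $1-\eta = (1-p/q)\cdot(\text{const depending on }p)$, i.e. $1-\eta \sim p/q$ up to the $p$-dependent factor. (One must be a little careful because $BMO$ is not a genuine endpoint of a Banach pair in the naive sense, but the computation \eqref{obama} plus the Holmstedt-type reiteration machinery cited in the excerpt makes this rigorous — this is essentially where the reference to re-scaling via reiteration in the introduction is used.) Then apply Lemma \ref{marcaada} with $X_1=L^p$, $X_2=BMO$, $\theta=\eta$, and $q=q$: this yields
\[
\|f\|_{L^q}=\|f\|_{(L^p,BMO)_{\eta,q}}\le [(1-\eta)\eta q]^{-1/q}\,\|f\|_{L^p}^{1-\eta}\|f\|_{BMO}^{\eta}.
\]
Since $1-\eta\sim p/q$, we have $\|f\|_{L^p}^{1-\eta}\|f\|_{BMO}^{\eta}\le C\|f\|_{L^p}^{p/q}\|f\|_{BMO}^{1-p/q}$ (absorbing bounded powers into the constant, using that $f$ lives on a cube so the various norms are comparable up to fixed constants independent of $q$), and the prefactor $[(1-\eta)\eta q]^{-1/q}$ must be shown to be $\le C_n q$. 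Here $\eta\to 1$ as $q\to\infty$, so $(1-\eta)\eta q \sim (p/q)\cdot q = p$, a constant, giving a bounded prefactor — wait, that is even better than $C_n q$; the factor $q$ in \eqref{bmo1} actually comes from the norm-equivalence constant in $(L^1,BMO)_{\theta,q}=L^q$, whose behavior as $\theta\to1$ is linear in $q$. So the honest accounting is: $\|f\|_{L^q}\le C(\theta,q)\|f\|_{(L^1,BMO)_{\theta,q}}$ with $C(\theta,q)\lesssim q$, then apply \eqref{jam1} to bound $\|f\|_{(L^1,BMO)_{\theta,q}}$ by $[(1-\theta)\theta q]^{-1/q}\|f\|_{L^1}^{1-\theta}\|f\|_{BMO}^{\theta}$, and finally reiterate/interpolate to replace $L^1$ by $L^p$ and adjust exponents to $p/q$.

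The main obstacle I expect is the careful bookkeeping of constants through the identification $(L^1,BMO)_{\theta,q}=L^q$ — specifically, proving that the equivalence constant grows no faster than linearly in $q$ as $\theta\to 1^-$. This is where the computation $K(t,f;L^1,BMO)\approx tf^{\#*}(t)$ is essential: it reduces the norm $\|f\|_{(L^1,BMO)_{\theta,q}}$ to a weighted $L^q$ norm of $f^{\#*}$, which by the reverse Hardy inequality (the "sharp reverse Hardy inequalities" advertised in the introduction, \cite{ren}, \cite{mil}) can be compared to the corresponding weighted norm of $f^*$ itself, i.e. to $\|f\|_{L^q}$, with the sharp constant. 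Tracking that sharp constant — and checking it contributes the single factor of $q$ and nothing worse — is the delicate point; everything else (reiteration, Lemma \ref{marcaada}, absorbing fixed powers of comparable norms on a bounded cube) is routine once the $p=1$ case with the correct constant is in hand.
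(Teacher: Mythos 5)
Your plan correctly identifies the main scaffolding of the paper's argument: the $K$-functional formula $K(t,f;L^1,BMO)\approx t f^{\#\ast}(t)$, Lemma~\ref{marcaada} to get the quantitative interpolation inequality, Holmstedt reiteration to handle $p>1$, and a reverse Hardy inequality. You also correctly diagnose that the crux is bounding the constant $C(\theta,q)$ in $\|f\|_{L^q}\le C(\theta,q)\|f\|_{(L^1,BMO)_{\theta,q}}$ by $O(q)$. But your account of how that bound is actually proved has a genuine gap, and the gap is precisely where the single factor of $q$ enters.

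After applying Lemma~\ref{marcaada} and the $K$-functional identity, one has a bound on $\|f^{\#\ast}\|_{L^q}$, and the task is to dominate $\|f^\ast\|_{L^q}=\|f\|_{L^q}$ by that quantity. Your proposal asserts this can be done ``by the reverse Hardy inequality,'' but the reverse Hardy inequality alone goes the wrong direction: it compares $f^\ast$ to $f^{\ast\ast}$, not to $f^{\#\ast}$. The paper needs (and you omit) two further inputs. First, the Bennett--DeVore--Sharpley pointwise inequality \eqref{ufa}, namely $f^{\ast\ast}(t)-f^\ast(t)\le c\,f^{\#\ast}(t)$ — a genuine rearrangement/maximal-function estimate that is not produced by the interpolation machinery. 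Second, the identity $-\frac{d}{dt}f^{\ast\ast}(t)=\tfrac{1}{t}\bigl(f^{\ast\ast}(t)-f^\ast(t)\bigr)$ plus integration and Hardy's inequality, which yields
\[
\Bigl\{\int_0^\infty f^{\ast\ast}(t)^q\,dt\Bigr\}^{1/q}\le q\,\Bigl\{\int_0^\infty\bigl[f^{\ast\ast}(t)-f^\ast(t)\bigr]^q\,dt\Bigr\}^{1/q}.
\]
It is this Hardy step — not the norm-equivalence constant in the abstract identification $(L^1,BMO)_{\theta,q}=L^q$ per se, nor the reverse Hardy inequality — that contributes the factor $q$; the reverse Hardy inequality \eqref{gurkha} then supplies a factor $\bigl(\tfrac{q-1}{q}\bigr)^{1/q}$ which exactly cancels the $\bigl(\tfrac{q}{q-1}\bigr)^{1/q}$ left over from Lemma~\ref{marcaada}. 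Without the BDS inequality and the Hardy step you have no route from $f^{\#\ast}$ to $f^\ast$, and your "delicate point" remains unproved. The rest of your outline (the reiteration reduction for $p>1$, the cube-versus-$\mathbb R^n$ adjustment) matches the paper.
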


The point of the result, of course, is the precise dependency of the constants
in terms of $q.$ Before going to the proof let us show how (\ref{expo})
follows from (\ref{bmo1}).

\begin{proof}
\textbf{(of (\ref{expo})}. From (\ref{bmo1}), applied to the case $p=1,$ we
find that, for all $q>1,$%
\begin{align*}
\left\Vert f\right\Vert _{L^{q}}  &  \leq C_{n}q\left\Vert f\right\Vert
_{L^{1}}^{1/q}\left\Vert f\right\Vert _{BMO}^{1-1/q}\\
&  \leq C_{n}q\left\Vert f\right\Vert _{BMO}.
\end{align*}
Hence, using for example the \textquotedblleft$\Delta"$ method of
extrapolation\footnote{For more recent developments in extrapolation theory
cf. \cite{ash}.} of \cite{jm}, we get%
\[
\left\Vert f\right\Vert _{\Delta(\frac{L^{q}}{q})}=\sup_{q>1}\frac{\left\Vert
f\right\Vert _{q}}{q}\approx\left\Vert f\right\Vert _{e^{L}}\leq
c_{n}\left\Vert f\right\Vert _{BMO}.
\]

\end{proof}

We now give a proof of Lemma \ref{marcao} using interpolation.

\begin{proof}
It will be convenient for us to work on $R^{n}$ (the same results hold for
cubes: See more details in Remark \ref{remarkao} below). We start by
considering the case $p=1$, the general case will follow by a re-scaling
argument, which we provide below$.$

The first step is to make explicit the way we obtain the real interpolation
spaces between $L^{1}$ and $BMO$. It is well known (cf. \cite{besa},
\cite{bs}, and the references therein) that
\begin{equation}
(L^{1},BMO)_{1-1/q,q}=L^{q},q>1. \label{interpolabmo}%
\end{equation}
Here the equality of the norms of the indicated spaces is within constants of
equivalence that depend only on $q$ and $n.$ In particular, we have%
\[
\left\Vert f\right\Vert _{L^{q}}\leq c(q,n)\left\Vert f\right\Vert
_{(L^{1},BMO)_{1-1/q,q}}.
\]
The program now is to give a precise estimate of $c(q,n)$ in terms of $q,$ and
then apply Lemma \ref{marcaada}. We shall work with $BMO$ provided by the
seminorm $\left\vert \cdot\right\vert _{BMO}$ (cf. (\ref{llamada}) above).

The following result was proved in \cite[Theorem 6.1]{besa},
\begin{equation}
K(t,f,L^{1}(R^{n}),BMO(R^{n}))\approx tf^{\#\ast}(t), \label{bmo4}%
\end{equation}
with absolute constants of equivalence, and where $f^{\#}$ denotes the sharp
maximal operator\footnote{For computations of related $K-$functionals and
further references cf. \cite{ja}, \cite{alvamil}.} (cf. \cite{fest},
\cite{cosa}, \cite{bs})%
\[
f^{\#}(x):=f_{R^{n}}^{\#}(x)=\sup_{x\ni Q}\frac{1}{\left\vert Q\right\vert
}\int_{Q}\left\vert f(y)-f_{Q}\right\vert dy,\text{ and }f_{Q}=\frac
{1}{\left\vert Q\right\vert }\int_{Q}f(y)dy.
\]
Let $f\in L^{1}\cap BMO,$ $q>1,$ and define $\theta$ by the equation $\frac
{1}{1-\theta}=q.$ Combining (\ref{bmo4}) and (\ref{jam1}), we have, with
absolute constants that do not depend on $q,\theta$ or $f,$%
\begin{align*}
\lbrack(1-\theta)\theta q]^{1/q}\left\{  \int_{0}^{\infty}[t^{-1+1/q}%
tf^{\#\ast}(t)]^{q}\frac{dt}{t}\right\}  ^{1/q}  &  \approx\lbrack
(1-\theta)\theta q]^{1/q}\left\Vert f\right\Vert _{(L^{1}(R^{n}),BMO(R^{n}%
))_{1-1/q,q}}\\
&  \preceq\Vert f\Vert_{L^{1}}^{1/q}\Vert f\Vert_{BMO}^{1-1/q}\text{.}%
\end{align*}
Thus,%
\begin{equation}
\left\{  \int_{0}^{\infty}f^{\#\ast}(t)^{q}dt\right\}  ^{1/q}\leq c(\frac
{q}{q-1})^{1/q}\Vert f\Vert_{L^{1}}^{1/q}\Vert f\Vert_{BMO}^{1-1/q}.
\label{bmo3}%
\end{equation}
Now, we recall that by \cite{bedesa}, as complemented in \cite[(3.8), pag
228]{sasch}, we have
\begin{equation}
f^{\ast\ast}(t)-f^{\ast}(t)\leq cf^{\#\ast}(t),t>0. \label{ufa}%
\end{equation}
Combining (\ref{ufa}) with (\ref{bmo3}), yields
\begin{equation}
\left\{  \int_{0}^{\infty}[f^{\ast\ast}(t)-f^{\ast}(t)]^{q}dt\right\}
^{1/q}\leq c(\frac{q}{q-1})^{1/q}\Vert f\Vert_{L^{1}}^{1/q}\Vert f\Vert
_{BMO}^{1-1/q}. \label{bailada}%
\end{equation}
Observe that, since $[tf^{\ast\ast}(t)]^{\prime}=(\int_{0}^{t}f^{\ast
}(s)ds)^{\prime}=f^{\ast}(t),$ we have
\[
\lbrack-f^{\ast\ast}(t)]^{\prime}=\frac{f^{\ast\ast}(t)-f^{\ast}(t)}{t}.
\]
Moreover, since $f\in L^{1},$ then $f^{\ast\ast}(\infty)=0,$ and it follows
from the fundamental theorem of calculus that we can write%
\[
f^{\ast\ast}(t)=\int_{t}^{\infty}f^{\ast\ast}(s)-f^{\ast}(s)\frac{ds}{s}.
\]
Consequently, by Hardy's inequality,%
\[
\left\{  \int_{0}^{\infty}f^{\ast\ast}(t)^{q}dt\right\}  ^{1/q}\leq q\left\{
\int_{0}^{\infty}[f^{\ast\ast}(t)-f^{\ast}(t)]^{q}dt\right\}  ^{1/q}.
\]
Inserting this information in (\ref{bailada}) we arrive at%
\begin{equation}
\left\{  \int_{0}^{\infty}f^{\ast\ast}(t)^{q}dt\right\}  ^{1/q}\leq
cq(\frac{q}{q-1})^{1/q}\Vert f\Vert_{L^{1}}^{1/q}\Vert f\Vert_{BMO}^{1-1/q}.
\label{verla}%
\end{equation}
We now estimate the left hand side of (\ref{verla}) from below. By the sharp
reverse Hardy inequality for decreasing functions (cf. \cite{ren}, \cite[Lemma
2.1]{mil}, see also \cite{xiao}) we can write%
\begin{align}
\left\Vert f\right\Vert _{q}  &  =\left\{  \int_{0}^{\infty}f^{\ast}%
(t)^{q}dt\right\}  ^{1/q}\nonumber\\
&  \leq(\frac{q-1}{q})^{1/q}\left\{  \int_{0}^{\infty}f^{\ast\ast}%
(t)^{q}dt\right\}  ^{1/q}. \label{gurkha}%
\end{align}
Combining the last inequality with (\ref{verla}) we obtain%
\begin{align*}
\left\Vert f\right\Vert _{q}  &  =\left\{  \int_{0}^{\infty}f^{\ast}%
(t)^{q}dt\right\}  ^{1/q}\\
&  \leq(\frac{q-1}{q})^{1/q}\left\{  \int_{0}^{\infty}f^{\ast\ast}%
(t)^{q}dt\right\}  ^{1/q}\\
&  \leq(\frac{q-1}{q})^{1/q}(\frac{q}{q-1})^{1/q}cq\left\{  \int_{0}^{\infty
}[f^{\ast\ast}(t)-f^{\ast}(t)]^{q}dt\right\}  ^{1/q}\\
&  \leq cq\Vert f\Vert_{L^{1}}^{1/q}\Vert f\Vert_{BMO}^{1-1/q},
\end{align*}
as we wanted to show.

Let us now consider the case $p>1.$ Let $q>p.$ By Holmstedt's reiteration
theorem (cf. \cite{bl}, \cite{holm}) we have%
\[
(L^{p},BMO)_{1-p/q,q}=((L^{1},BMO)_{1-1/p,p},BMO)_{1-p/q},_{q},
\]
and, moreover, with absolute constants that depend only on $p,$
\[
K(t,f;L^{p},BMO)\approx\left\{  \int_{0}^{t^{p}}(s^{\frac{1}{p}-1}sf^{\#\ast
}(s))^{p}\frac{ds}{s}\right\}  ^{1/p}=\left\{  \int_{0}^{t^{p}}f^{\#\ast
}(s)^{p}ds\right\}  ^{1/p}.
\]
By Lemma \ref{marcaada} it follows that, with constants independent of $q,f,$
we have%
\begin{align*}
\left\{  \int_{0}^{\infty}[t^{-(1-p/q)}\left\{  \int_{0}^{t^{p}}f^{\#\ast
}(s)^{p}ds\right\}  ^{1/p}]^{q}\frac{dt}{t}\right\}  ^{1/q}  &  \approx
\left\Vert f\right\Vert _{(L^{p}(R^{n}),BMO(R^{n}))_{1-p/q,q}}\\
&  \preceq p^{-1/q}[q-p]^{-1/q}q^{1/q}\Vert f\Vert_{L^{p}}^{p/q}\Vert
f\Vert_{BMO}^{1-p/q}.
\end{align*}
Now,%
\begin{align*}
\left\{  \int_{0}^{\infty}[t^{-(1-p/q)q}\left\{  \int_{0}^{t^{p}}f^{\#\ast
}(s)^{p}ds\right\}  ^{q/p}\frac{dt}{t}\right\}  ^{1/q}  &  =\left\{  \int
_{0}^{\infty}[t^{-(1-p/q)q}t^{q}\left\{  \frac{1}{t^{p}}\int_{0}^{t^{p}%
}f^{\#\ast}(s)^{p}ds\right\}  ^{q/p}\frac{dt}{t}\right\}  ^{1/q}\\
&  =\left\{  \int_{0}^{\infty}t^{p}\left\{  \frac{1}{t^{p}}\int_{0}^{t^{p}%
}f^{\#\ast}(s)^{p}ds\right\}  ^{q/p}\frac{dt}{t}\right\}  ^{1/q}\\
&  =\left(  \frac{1}{p}\right)  ^{1/q}\left\{  \int_{0}^{\infty}u\left\{
\frac{1}{u}\int_{0}^{u}f^{\#\ast}(s)^{p}ds\right\}  ^{q/p}\frac{du}%
{u}\right\}  ^{1/q}\\
&  =\left(  \frac{1}{p}\right)  ^{1/q}\left[  \left\{  \int_{0}^{\infty
}\left\{  \frac{1}{u}\int_{0}^{u}f^{\#\ast}(s)^{p}ds\right\}  ^{q/p}%
du\right\}  ^{p/q}\right]  ^{1/p}\\
&  \geq\left(  \frac{1}{p}\right)  ^{1/q}\left[  \frac{\frac{q}{p}}%
{q/p-1}\right]  ^{1/q}\left\{  \int_{0}^{\infty}f^{\#\ast}(u)^{q}du\right\}
^{1/q},
\end{align*}
where in the last step we have used the reverse sharp Hardy inequality (cf.
\cite[Lemma 2.1]{mil}). Consequently,%
\begin{align*}
\left\{  \int_{0}^{\infty}f^{\#\ast}(u)^{q}du\right\}  ^{1/q}  &  \preceq
p^{1/q}\left[  \frac{q/p-1}{q/p}\right]  ^{1/q}p^{-1/q}[q-p]^{-1/q}%
q^{1/q}\Vert f\Vert_{L^{p}}^{p/q}\Vert f\Vert_{BMO}^{1-p/q}\\
&  \sim\Vert f\Vert_{L^{p}}^{p/q}\Vert f\Vert_{BMO}^{1-p/q}.
\end{align*}
Hence, by the analysis we already did for the case $p=1,$ we see that%
\begin{align*}
\left\{  \int_{0}^{\infty}f^{\ast}(t)^{q}dt\right\}  ^{1/q}  &  \preceq
(\frac{q-1}{q})^{1/q}q\Vert f\Vert_{L^{p}}^{p/q}\Vert f\Vert_{BMO}^{1-p/q}\\
&  \preceq q\Vert f\Vert_{L^{p}}^{p/q}\Vert f\Vert_{BMO}^{1-p/q},
\end{align*}
as we wished to show.
\end{proof}

\begin{remark}
\label{remarkao}If we work on a cube $Q_{0}$, the replacement of (\ref{ufa})
is (cf. \cite{bs})%
\[
f^{\ast\ast}(t)-f^{\ast}(t)\leq cf^{\#\ast}(t),0<t<\left\vert Q_{0}\right\vert
/3.
\]
In this situation, we have $BMO(Q_{0})\subset L^{1}(Q_{0}),$ and we readily
see that $\left\{  \int_{0}^{\left\vert Q_{0}\right\vert /3}[t^{-1+1/q}%
tf^{\#\ast}(t)]^{q}\frac{dt}{t}\right\}  ^{1/q}$ is an equivalent seminorm for
$(L^{1}(Q_{0}),BMO(Q_{0}))_{1-1/q,q}.$ The rest of the proof now follows
mutatis mutandis.
\end{remark}

\begin{remark}
For related Hardy inequalities for one dimensional oscillation operators of
the form $f_{\#}(t)=$ $\frac{1}{t}\int_{0}^{t}f(s)ds-f(t),$ cf. \cite{misa}
and \cite{krular}.
\end{remark}

\section{The rearrangement Invariant Hull of $BMO$ and Gagliardo coordinate
spaces\label{secc:coordenadas}}

In this section we introduce the \textquotedblleft Gagliardo coordinate
spaces" (cf. \cite{holm}, \cite{jm1}, \cite{mami}) and we use them to extend
(\ref{intro1}) to the setting of real interpolation.

Let $\theta\in\lbrack0,1],q\in(0,\infty].$ Following the discussion given in
the Introduction, we define the \textquotedblleft Gagliardo coordinate spaces"
as follows\footnote{Think in terms of a Gagliardo diagram, see for example
\cite[page 39]{bl}, \cite{jamig}, \cite{jm1}.}%
\[
(X_{1},X_{2})_{\theta,q}^{(1)}=\left\{  f\in X_{1}+X_{2}:\left\Vert
f\right\Vert _{(X_{1},X_{2})_{\theta,q}^{(1)}}<\infty\right\}  ,
\]
where%
\[
\left\Vert f\right\Vert _{(X_{1},X_{2})_{\theta,q}^{(1)}}=\left\{  \int
_{0}^{\infty}\left(  t^{1-\theta}\left[  \frac{K(t,f;X_{1},X_{2})}%
{t}-K^{\prime}(t,f;X_{1},X_{2})\right]  \right)  ^{q}\frac{dt}{t}\right\}
^{1/q},
\]
and%
\[
(X_{1},X_{2})_{\theta,q}^{(2)}=\left\{  f\in X_{1}+X_{2}:\left\Vert
f\right\Vert _{(X_{1},X_{2})_{\theta,q}^{(2)}}<\infty\right\}  ,
\]%
\[
\left\Vert f\right\Vert _{(X_{1},X_{2})_{\theta,q}^{(2)}}=\left\{  \int
_{0}^{\infty}(t^{-\theta}tK^{\prime}(t,f;X_{1},X_{2}))^{q}\frac{dt}%
{t}\right\}  ^{1/q},
\]
and we compare them to the classical Lions-Peetre spaces $(X_{1}%
,X_{2})_{\theta,q}.$

The Gagliardo coordinate spaces in principle are not linear, and the
corresponding functionals, $\left\Vert f\right\Vert _{(X_{1},X_{2})_{\theta
,q}^{(i)}},i=1,2,$ are not norms. However, it turns out that, when $\theta
\in(0,1),q\in(0,\infty],$ we have, with *norm* equivalence (cf. \cite{holm},
\cite{jm1}),
\begin{equation}
(X_{1},X_{2})_{\theta,q}^{(1)}=(X_{1},X_{2})_{\theta,q}^{(2)}=(X_{1}%
,X_{2})_{\theta,q}. \label{equiva}%
\end{equation}
More precisely, the \textquotedblleft norm\textquotedblright\ equivalence
depends only on $\theta,$ and $q.$ On the other hand, at the end points,
$\theta=0$ or $\theta=1,$ the resulting spaces can be very different.

\begin{example}
\label{ejemplomarkao}Let $(X_{1},X_{2})=(L^{1},L^{\infty}).$ Then, if
$\theta=1,q=\infty,$ we have%
\begin{equation}
\left\Vert f\right\Vert _{(X_{1},X_{2})_{1,\infty}^{(1)}}=\left\Vert
f\right\Vert _{L(\infty,\infty)}, \label{wehaveseen}%
\end{equation}
while%
\[
\left\Vert f\right\Vert _{(X_{1},X_{2})_{1,\infty}}=\left\Vert f\right\Vert
_{(X_{1},X_{2})_{1,\infty}^{(2)}}=\left\Vert f\right\Vert _{L^{\infty}}.
\]
For $\theta=1,q<\infty,$%
\[
\left\Vert f\right\Vert _{(X_{1},X_{2})_{1,q}^{(1)}}=\left\Vert f\right\Vert
_{L(\infty,q)}.
\]
On the other hand,%
\[
\left\Vert f\right\Vert _{(X_{1},X_{2})_{1,q}^{(2)}}=\left\{  \int_{0}%
^{\infty}f^{\ast}(t)^{q}\frac{dt}{t}\right\}  ^{1/q}\leq\left\{  \int
_{0}^{\infty}f^{\ast\ast}(t)^{q}\frac{dt}{t}\right\}  ^{1/q}=\left\Vert
f\right\Vert _{(X_{1},X_{2})_{1,q}},
\]
and%
\[
\left\Vert f\right\Vert _{(X_{1},X_{2})_{1,q}^{(2)}}<\infty\Leftrightarrow
f=0.
\]

For $\theta=0,q=\infty,$%
\[
\left\Vert f\right\Vert _{(X_{1},X_{2})_{0,\infty}}=\sup_{t}tf^{\ast\ast
}(t)=\left\Vert f\right\Vert _{L^{1}},
\]
while%
\[
\left\Vert f\right\Vert _{(X_{1},X_{2})_{0,\infty}^{(2)}}=\sup_{t}tf^{\ast
}(t)=\left\Vert f\right\Vert _{L(1,\infty)}.
\]
Moreover,%
\begin{align*}
\left\Vert f\right\Vert _{(X_{1},X_{2})_{0,\infty}^{(1)}}  &  =\sup
_{t}t(f^{\ast\ast}(t)-f^{\ast}(t))\\
&  =\sup_{t}\int_{f^{\ast}(t)}^{\infty}\lambda_{f}(s)ds.
\end{align*}
Therefore, if $f^{\ast}(\infty)=0,$ then%
\begin{align*}
\left\Vert f\right\Vert _{(X_{1},X_{2})_{0,\infty}^{(1)}}  &  =\int
_{0}^{\infty}\lambda_{f}(s)ds\\
&  =\left\Vert f\right\Vert _{L^{1}}.
\end{align*}
Also, if $f^{\ast\ast}(\infty)=0,$%
\begin{align*}
\left\Vert f\right\Vert _{(X_{1},X_{2})_{1,1}^{(1)}}  &  =\int_{0}^{\infty
}\left[  f^{\ast\ast}(t)-f^{\ast}(t)\right]  \frac{dt}{t}\\
&  =\lim_{r\rightarrow0}\int_{r}^{\infty}\left[  f^{\ast\ast}(t)-f^{\ast
}(t)\right]  \frac{dt}{t}\\
&  =\lim_{r\rightarrow0}\left(  f^{\ast\ast}(r)-f^{\ast\ast}(\infty)\right)
\text{ (since }\frac{d}{dr}(-f^{\ast\ast}(t))=\frac{f^{\ast\ast}(t)-f^{\ast
}(t)}{t}\text{)}\\
&  =\left\Vert f\right\Vert _{L^{\infty}}.
\end{align*}

\end{example}

\begin{theorem}
\label{teomarkao}Let $\theta\in\lbrack0,1),$ and let $1\leq q<\infty.$ Then,
there exists an absolute constant $c$ such that%
\begin{equation}
\left\Vert f\right\Vert _{\vec{X}_{\theta,q}^{(2)}}\leq cq\left(
1+[(1-\theta)q]^{1/q}\right)  ^{1-\theta}[(1-\theta)q]^{-\theta}\left\Vert
f\right\Vert _{X_{1}}^{1-\theta}\left\Vert f\right\Vert _{\vec{X}_{1,\infty
}^{(1)}}^{\theta}. \label{verde}%
\end{equation}
In particular, if $(1-\theta)q=1,$%
\begin{equation}
\left\Vert f\right\Vert _{\vec{X}_{\theta,q}^{(2)}}\leq cq\left\Vert
f\right\Vert _{X_{1}}^{1-\theta}\left\Vert f\right\Vert _{\vec{X}_{1,\infty
}^{(1)}}^{\theta}. \label{bmo5}%
\end{equation}

\end{theorem}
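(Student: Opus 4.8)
The plan is to reduce Theorem \ref{teomarkao} to the mechanism already used in the proof of Lemma \ref{marcao}, but carried out at the abstract level of Gagliardo coordinate spaces. Write $K(t) = K(t,f;\vec{X})$ and $k(t) = K'(t) = \frac{d}{dt}K(t,f;\vec{X})$, so that $\|D_2(t)f\|_{X_2} = k(t)$ and $\|D_1(t)f\|_{X_1} = K(t)/t - k(t) \cdot (\text{wait})$ --- more precisely $\|D_1(t)f\|_{X_1} = K(t) - tk(t)$, so that the relevant ``oscillation'' quantity is $\frac{K(t)}{t} - k(t) = \frac{K(t) - tk(t)}{t}$, which is exactly the integrand (after the weight $t^{1-\theta}$) defining $\|f\|_{\vec{X}_{1,\infty}^{(1)}}$ when $\theta = 1$. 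The goal inequality bounds $\|f\|_{\vec{X}_{\theta,q}^{(2)}} = \{\int_0^\infty (t^{-\theta} t k(t))^q \frac{dt}{t}\}^{1/q} = \{\int_0^\infty (t^{1-\theta} k(t))^q \frac{dt}{t}\}^{1/q}$ in terms of $\|f\|_{X_1}$ and $\sup_t t^{-1}\bigl(\tfrac{K(t)}{t}-k(t)\bigr)$... let me restate: $\|f\|_{\vec{X}_{1,\infty}^{(1)}} = \sup_{t>0} \bigl( \tfrac{K(t)}{t} - k(t) \bigr)$.

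The key steps, in order, are as follows. First, observe that since $-\frac{d}{dt}\bigl(\frac{K(t)}{t}\bigr) = \frac{K(t)/t - k(t)}{t} = \frac{1}{t}\bigl(\frac{K(t)}{t} - k(t)\bigr)$ (the abstract analogue of $[-f^{**}(t)]' = (f^{**}(t)-f^*(t))/t$), one can recover $K(t)/t$ by integrating the oscillation: assuming $K(t)/t \to 0$ as $t\to\infty$ (which holds for $f \in X_1$, since $K(t,f;\vec X) \le \|f\|_{X_1}$ forces $K(t)/t \to 0$), we get $\frac{K(t)}{t} = \int_t^\infty \bigl(\frac{K(s)}{s} - k(s)\bigr)\frac{ds}{s}$. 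Second, apply the weighted Hardy inequality to pass from $\{\int_0^\infty (t^{1-\theta}(\tfrac{K(t)}{t}-k(t)))^q\frac{dt}{t}\}^{1/q}$ --- i.e. $\|f\|_{\vec X_{\theta,1}^{(1)}}$-type quantity, which for $\theta<1$ is $\approx \|f\|_{(X_1,X_2)_{\theta,q}}$ by \eqref{equiva} --- up to $\{\int_0^\infty (t^{1-\theta}\tfrac{K(t)}{t})^q\frac{dt}{t}\}^{1/q}$, with the Hardy constant behaving like $\frac{1}{1-\theta}$ (hence the $(1-\theta)q$-type factors in \eqref{verde}); note $t^{1-\theta}\frac{K(t)}{t} = t^{-\theta}K(t)$, so this last quantity is precisely $\|f\|_{(X_1,X_2)_{\theta,q}}$. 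Third, use \eqref{equiva} to identify $\|f\|_{\vec X_{\theta,q}^{(2)}} \approx \|f\|_{(X_1,X_2)_{\theta,q}}$ (with $\theta$-and-$q$-dependent constant), and then apply Lemma \ref{marcaada}, i.e. $[(1-\theta)\theta q]^{1/q}\|f\|_{(X_1,X_2)_{\theta,q}} \le \|f\|_{X_1}^{1-\theta}\|f\|_{X_2}^{\theta}$ --- but with $X_2$ replaced by the Gagliardo coordinate endpoint. The final bookkeeping step is to assemble the three constants: the Hardy factor ($\sim (1-\theta)^{-1}q$ type), the Lemma \ref{marcaada} factor ($[(1-\theta)\theta q]^{-1/q}$), and the equivalence-constant in \eqref{equiva}, and to check they combine into the clean expression $cq(1+[(1-\theta)q]^{1/q})^{1-\theta}[(1-\theta)q]^{-\theta}$.

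The main obstacle is the last step: one must be careful that \eqref{equiva} and Lemma \ref{marcaada} are being applied with the \emph{correct} endpoint space. The quantity $\|f\|_{\vec X_{1,\infty}^{(1)}} = \sup_t(\tfrac{K(t)}{t}-k(t))$ is \emph{not} $\|f\|_{X_2}$ in general; it is a (possibly larger, non-normable) space sitting between $X_2$ and $\Sigma(\vec X)$. So one cannot directly quote Lemma \ref{marcaada} with $X_2 = \vec X_{1,\infty}^{(1)}$. The fix is to run the Hardy/interpolation argument keeping $\sup_t(\tfrac{K(t)}{t}-k(t))$ as the controlling quantity throughout: bound $t^{1-\theta}(\tfrac{K(t)}{t}-k(t)) \le \min\{t^{1-\theta}\cdot\|f\|_{\vec X_{1,\infty}^{(1)}},\ t^{1-\theta}\cdot\tfrac{K(t)}{t}\} \le \min\{t^{1-\theta}\|f\|_{\vec X_{1,\infty}^{(1)}},\ t^{-\theta}\|f\|_{X_1}\}$ (using $K(t)\le\|f\|_{X_1}$), split the integral at the crossover point $t_0$ where $t_0\|f\|_{\vec X_{1,\infty}^{(1)}} = \|f\|_{X_1}$, and compute directly --- this is precisely the computation inside the proof of Lemma \ref{marcaada}, and it produces the $[(1-\theta)q]$-dependence. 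Then feed the result through Hardy (for the $(1)\to(2)$ passage via \eqref{equiva}) and collect constants. I expect the $\theta \to 1$ degeneration to require the most care, since there the $(1-\theta)^{-\theta}$ factor blows up and one must verify the stated bound still captures it correctly.
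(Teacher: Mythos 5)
Your proposed route is genuinely different from the paper's. The paper proves Theorem \ref{teomarkao} by a direct two-piece estimate: split $\|f\|_{\vec X_{\theta,q}^{(2)}}^q$ at an arbitrary $t$; on $(0,t)$ bound $K'(u)\le K(u)/u$ (concavity) and expand $K(u)/u = K(t)/t + \int_u^t\bigl(\tfrac{K(r)}{r}-K'(r)\bigr)\tfrac{dr}{r}\le K(t)/t + \log\tfrac{t}{u}\,\|f\|_{\vec X_{1,\infty}^{(1)}}$, integrating the logarithmic term via the Gamma function; on $(t,\infty)$ use a H\"older-type factorization of the integrand and $K(\cdot)\le\|f\|_{X_1}$; then balance in $t$. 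You instead propose: use $K'(t)\le K(t)/t$ and the identity $\tfrac{K(t)}{t}=\int_t^\infty\bigl(\tfrac{K(s)}{s}-K'(s)\bigr)\tfrac{ds}{s}$ together with Hardy's inequality to pass from $\|f\|_{\vec X_{\theta,q}^{(2)}}\le\|f\|_{\vec X_{\theta,q}}$ to $\tfrac{1}{1-\theta}\|f\|_{\vec X_{\theta,q}^{(1)}}$, and then bound the latter by the two-term ``$\min$'' split exactly as in the proof of Lemma \ref{marcaada}, with $\|f\|_{X_2}$ replaced by $\|f\|_{\vec X_{1,\infty}^{(1)}}$. You are right that Lemma \ref{marcaada} cannot be quoted verbatim (the Gagliardo endpoint is not $X_2$), and your fix---bounding the oscillation pointwise by $\min\{t^{1-\theta}\|f\|_{\vec X_{1,\infty}^{(1)}},\ t^{-\theta}\|f\|_{X_1}\}$ and splitting at the crossover---is the correct way to transplant that computation. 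Carried out, it gives
\[
\left\Vert f\right\Vert_{\vec X_{\theta,q}^{(2)}}\ \le\ (1-\theta)^{-1}\,\bigl[(1-\theta)\theta q\bigr]^{-1/q}\,\left\Vert f\right\Vert_{X_1}^{1-\theta}\left\Vert f\right\Vert_{\vec X_{1,\infty}^{(1)}}^{\theta},\qquad \theta\in(0,1).
\]
This is correct and is arguably cleaner than the paper's computation: it avoids the $\log$-expansion and Gamma-function step entirely.

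There is, however, a genuine quantitative gap relative to the stated theorem. Your constant contains the factor $\theta^{-1/q}$, which is absent from \eqref{verde}; the theorem's constant $cq\bigl(1+[(1-\theta)q]^{1/q}\bigr)^{1-\theta}[(1-\theta)q]^{-\theta}$ stays bounded as $\theta\to 0$ (for fixed $q$), while yours diverges. In particular your argument does not cover $\theta=0$ at all: the second piece of the $\min$-split, $\int_{t_0}^\infty\bigl(t^{-\theta}\|f\|_{X_1}\bigr)^q\tfrac{dt}{t}$, diverges when $\theta=0$. Correspondingly, along the diagonal $(1-\theta)q=1$ that produces \eqref{bmo5}, your constant becomes $q\,(1-1/q)^{-1/q}$, which is $\asymp q$ for $q$ bounded away from $1$ (so the John--Nirenberg application is fine) but blows up as $q\to1^+$, whereas the paper's \eqref{bmo5} holds with $cq$ uniformly. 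You flagged $\theta\to1$ as the danger zone, but the real deficiency of your route is at $\theta\to0$. A second, smaller issue: you should not detour through the equivalence theorem \eqref{equiva} at all---its constants are precisely the quantities you are trying to track---since the one-sided inequality $\|f\|_{\vec X_{\theta,q}^{(2)}}\le\|f\|_{\vec X_{\theta,q}}$ holds with constant $1$ by concavity of $K$, and the reverse Hardy step already supplies the other comparison with explicit constant $\tfrac{1}{1-\theta}$. With that cleanup and the caveat on $\theta$ near $0$, your argument is a valid alternative proof of the substantive content of the theorem, but not of the statement for all $\theta\in[0,1)$ with the claimed constant.
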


Before going to the proof let us argue why such a result could be termed a
generalized John-Nirenberg inequality. Indeed, let $Q$ be a cube in $R^{n},$
and consider the pair $\vec{X}=(L^{1}(Q),L^{\infty}(Q)).$ As we have seen (cf.
(\ref{wehaveseen}))%
\[
\left\Vert f\right\Vert _{\vec{X}_{1,\infty}^{(1)}}=\left\Vert f\right\Vert
_{L(\infty,\infty)}.
\]
By definition, if $f\in L(\infty,\infty)(Q),$ then $f\in L^{1}(Q).$ We now
show that, moreover, $\left\Vert f\right\Vert _{L^{1}}\leq\left\vert
Q\right\vert \left\Vert f\right\Vert _{L(\infty,\infty)}.$ Indeed, for all
$t>0$ we have (cf. \cite[Theorem 2.1 (ii)]{aalto})%
\begin{align}
\int_{\{\left\vert f\right\vert >t\}}\left\vert f(x)\right\vert dx  &
\leq\int_{f^{\ast}(\lambda_{f}(t))}^{\infty}\lambda_{f}(r)dr+t\lambda
_{f}(t)\nonumber\\
&  =\lambda_{f}(t)[f^{\ast\ast}(\lambda_{f}(t))-f^{\ast}(\lambda
_{f}(t))]+t\lambda_{f}(t)\nonumber\\
&  \leq\lambda_{f}(t)\left(  \left\Vert f\right\Vert _{L(\infty,\infty
)}+t\right) \nonumber\\
&  \leq\left\vert Q\right\vert \left(  \left\Vert f\right\Vert _{L(\infty
,\infty)}+t\right)  , \label{nuevo}%
\end{align}
where in the second step we have used the formula (do a graph!)%
\[
s(f^{\ast\ast}(s)-f^{\ast}(s))=\int_{f^{\ast}(s)}^{\infty}\lambda_{f}(u)du.
\]
Let $t\rightarrow0$ in (\ref{nuevo}) then, by Fatou's Lemma, we see that
\begin{equation}
\left\Vert f\right\Vert _{L^{1}}\leq\left\vert Q\right\vert \left\Vert
f\right\Vert _{L(\infty,\infty)}. \label{nuevo1}%
\end{equation}
Now, let $\theta\in(0,1),$ and $\frac{1}{q}=1-\theta.$ Then,%
\begin{align*}
\left\Vert f\right\Vert _{\vec{X}_{\theta,q}^{(2)}}  &  =\left\{  \int
_{0}^{\infty}(t^{-\theta}tK^{\prime}(t,f;L^{1},L^{\infty}))^{q}\frac{dt}%
{t}\right\}  ^{1/q}\\
&  =\left\{  \int_{0}^{\infty}(t^{-(1-1/q)}tf^{\ast}(t))^{q}\frac{dt}%
{t}\right\}  ^{1/q}\\
&  =\left\Vert f\right\Vert _{L^{q}}.
\end{align*}
Therefore, by (\ref{bmo5}) and (\ref{nuevo1}),%
\begin{align*}
\left\Vert f\right\Vert _{L^{q}}  &  \leq cq\left\Vert f\right\Vert _{L^{1}%
}^{1/q}\left\Vert f\right\Vert _{L(\infty,\infty)}^{1/q^{\prime}}\\
&  \leq cq\left\vert Q\right\vert \left\Vert f\right\Vert _{L(\infty,\infty
)}\\
&  \leq cq\left\Vert f\right\Vert _{L(\infty,\infty)}.
\end{align*}
Consequently,%
\[
\left\Vert f\right\Vert _{e^{L}}\approx\left\Vert f\right\Vert _{\Delta
(\frac{L^{q}}{q})}=\sup_{q}\frac{\left\Vert f\right\Vert _{L^{q}}}{q}\leq
c\left\Vert f\right\Vert _{L(\infty,\infty)}.
\]

\begin{proof}
\textbf{(of Theorem (\ref{teomarkao})}. Let us write%
\begin{align*}
\left\Vert f\right\Vert _{\vec{X}_{\theta,q}^{(2)}}  &  =\left(  \int
_{0}^{\infty}(u^{1-\theta}\frac{d}{du}K(u,f;\vec{X}))^{q}\frac{du}{u}\right)
^{1/q}\\
&  =\left(  \int_{0}^{t}(u^{1-\theta}\frac{d}{du}K(u,f;\vec{X}))^{q}\frac
{du}{u}\right)  ^{1/q}+\left(  \int_{t}^{\infty}(u^{1-\theta}\frac{d}%
{du}K(u,f;\vec{X}))^{q}\frac{du}{u}\right)  ^{1/q}\\
&  =(I)+(II).
\end{align*}
We estimate these two terms as follows,%
\begin{align*}
(I)  &  =\left(  \int_{0}^{t}u^{(1-\theta)q}(\frac{d}{du}K(u,f;\vec{X}%
))^{q}\frac{du}{u}\right)  ^{1/q}\\
&  \leq\left(  \int_{0}^{t}u^{(1-\theta)q}(\frac{K(u,f;\vec{X})}{u})^{q}%
\frac{du}{u}\right)  ^{1/q}\text{ (since }\frac{d}{du}K(u,f;\vec{X})\leq
\frac{K(u,f;\vec{X})}{u}).
\end{align*}
On the other hand, since $\left(  \frac{K(u,f;\vec{X})}{u}\right)  ^{\prime
}=\frac{K^{\prime}(u,f;\vec{X})u-K(u,f;\vec{X})}{u^{2}},$ we have that, for
$0<u<t,$%
\begin{align*}
\frac{K(u,f;\vec{X})}{u}  &  =\frac{K(t,f;\vec{X})}{t}+\left(  -\left.
\frac{K(\cdot,f;\vec{X})}{\cdot}\right\vert _{u}^{t}\right) \\
&  =\frac{K(t,f;\vec{X})}{t}+\int_{u}^{t}\left(  \frac{K(r,f;\vec{X})}%
{r}-K^{\prime}(r,f;\vec{X})\right)  \frac{dr}{r}\\
&  \leq\frac{K(t,f;\vec{X})}{t}+\left(  \log\frac{t}{u}\right)  \sup_{r\leq
t}\left(  \frac{K(r,f;\vec{X})}{r}-K^{\prime}(r,f;\vec{X})\right) \\
&  \leq\frac{K(t,f;\vec{X})}{t}+\log\frac{t}{u}\left\Vert f\right\Vert
_{\vec{X}_{1,\infty}^{(1)}}.
\end{align*}
Therefore, by the triangle inequality,%
\begin{align*}
(I)  &  \leq\left(  \int_{0}^{t}(u^{(1-\theta)q}\{\frac{K(t,f;\vec{X})}%
{t}+\log\frac{t}{u}\left\Vert f\right\Vert _{\vec{X}_{1,\infty}^{(1)}}%
\}^{q}\frac{du}{u}\right)  ^{1/q}\\
&  \leq\frac{K(t,f;\vec{X})}{t}\left(  \int_{0}^{t}u^{(1-\theta)q}\frac{du}%
{u}\right)  ^{1/q}+\left\Vert f\right\Vert _{\vec{X}_{1,\infty}^{(1)}}\left(
\int_{0}^{t}u^{(1-\theta)q}\left(  \log\frac{t}{u}\right)  ^{q}\frac{du}%
{u}\right)  ^{1/q}\\
&  =\frac{K(t,f;\vec{X})}{t}\frac{t^{(1-\theta)}}{[(1-\theta)q]^{1/q}%
}+\left\Vert f\right\Vert _{\vec{X}_{1,\infty}^{(1)}}\left(  \int_{0}%
^{t}u^{(1-\theta)q}\left(  \log\frac{t}{u}\right)  ^{q}\frac{du}{u}\right)
^{1/q}\\
&  =(a)+(b).
\end{align*}
For the term $(a)$ we have%
\begin{align*}
(a)  &  \leq\frac{t^{-\theta}}{[(1-\theta)q]^{1/q}}\lim_{t\rightarrow\infty
}K(t,f;\vec{X})\text{ (since }K(\cdot,f;\vec{X})\text{ increases)}\\
&  \leq\frac{t^{-\theta}}{[(1-\theta)q]^{1/q}}\left\Vert f\right\Vert _{X_{1}%
}.
\end{align*}
To deal with $(b)$ we use the asymptotics of the gamma function as follows:
let $s=\log\frac{t}{u},$ then $u=te^{-s},$ $du=-te^{-s}ds,$ $\frac{du}%
{u}=-ds,u^{(1-\theta)q}=t^{(1-\theta)q}e^{-s(1-\theta)q},$ and we have%
\begin{align*}
(b)  &  =\left\Vert f\right\Vert _{\vec{X}_{1,\infty}^{(1)}}\left(  \int
_{0}^{t}u^{(1-\theta)q}s^{q}\frac{du}{u}\right)  ^{1/q}\\
&  =\left\Vert f\right\Vert _{\vec{X}_{1,\infty}^{(1)}}t^{1-\theta}\left(
\int_{0}^{\infty}e^{-s(1-\theta)q}s^{q}ds\right)  ^{1/q}\\
&  =\left\Vert f\right\Vert _{\vec{X}_{1,\infty}^{(1)}}t^{1-\theta}\left(
\int_{0}^{\infty}e^{-\tau}\frac{\tau^{q}}{[(1-\theta)q]^{q}}\frac{d\tau
}{[(1-\theta)q]}\right)  ^{1/q},\text{ (let }\tau=s(1-\theta)q)\\
&  =\frac{\left\Vert f\right\Vert _{\vec{X}_{1,\infty}^{(1)}}}{[(1-\theta
)q]}\frac{1}{[(1-\theta)q]^{1/q}}t^{1-\theta}\left(  \Gamma(q+1)\right)
^{1/q}\\
&  \leq\frac{\left\Vert f\right\Vert _{\vec{X}_{1,\infty}^{(1)}}}%
{[(1-\theta)q]}\frac{1}{[(1-\theta)q]^{1/q}}t^{1-\theta}q\text{.}%
\end{align*}
Combining inequalities for $(a)$ and $(b)$ we have%
\[
(I)\leq\frac{t^{-\theta}}{[(1-\theta)q]^{1/q}}\left\Vert f\right\Vert _{X_{1}%
}+\frac{\left\Vert f\right\Vert _{\vec{X}_{1,\infty}^{(1)}}}{[(1-\theta
)q]}\frac{1}{[(1-\theta)q]^{1/q}}t^{1-\theta}q.
\]
We now estimate $(II):$%
\begin{align*}
(II)  &  =\left(  \int_{t}^{\infty}u^{(1-\theta)q}u^{-1}\left(  \frac{d}%
{du}K(u,f;\vec{X})\right)  ^{q-1}\left(  u\frac{d}{du}K(u,f;\vec{X})\right)
\frac{du}{u}\right)  ^{1/q}\\
&  \leq\left\{  \sup_{u\geq t}(u^{\frac{(1-\theta)q-1}{q}}\left(  \frac{d}%
{du}K(u,f;\vec{X})\right)  ^{\frac{q-1}{q}})\right\}  \left\{  \int
_{t}^{\infty}\frac{d}{du}K(u,f;\vec{X})du\right\}  ^{1/q}\\
&  =(c)(d).
\end{align*}
The factors on the right hand side can be estimated as follows,
\begin{align*}
(d)  &  =\left(  \lim_{u\mapsto\infty}K(u,f;\vec{X})-K(t,f;\vec{X})\right)
^{1/q}\\
&  \leq\left(  \lim_{u\mapsto\infty}K(u,f;\vec{X})\right)  ^{1/q}\\
&  =\left\Vert f\right\Vert _{X_{0}}^{1/q}.
\end{align*}
Also, since $K(\cdot,f;\vec{X})$ is concave, $\frac{d}{du}K(u,f;\vec{X}%
)\leq\frac{K(u,f;\vec{X})}{u},$ consequently,%
\begin{align*}
(c)  &  \leq\left\Vert f\right\Vert _{X_{1}}^{1-1/q}\sup_{u\geq t}%
\{u^{\frac{(1-\theta)q-1}{q}-\frac{q-1}{q}}\}\\
&  \leq\left\Vert f\right\Vert _{X_{1}}^{1-1/q}\{\sup_{u\geq t}u^{-\theta}\}\\
&  =\left\Vert f\right\Vert _{X_{1}}^{1-1/q}t^{-\theta}.
\end{align*}
Thus,%
\begin{align*}
(II)  &  \leq\left\Vert f\right\Vert _{X_{1}}^{1/q}\left\Vert f\right\Vert
_{X_{1}}^{1-1/q}t^{-\theta}\\
&  =\left\Vert f\right\Vert _{X_{1}}t^{-\theta}.
\end{align*}
Combining the estimates for $(I)$ and $(II)$ yields,%
\begin{align}
\left\Vert f\right\Vert _{\vec{X}_{\theta,q}^{(2)}}  &  \leq\left(
\frac{1+[(1-\theta)q]^{1/q}}{[(1-\theta)q]^{1/q}}\right)  t^{-\theta
}\left\Vert f\right\Vert _{X_{1}}+\frac{1}{[(1-\theta)q]}\frac{1}%
{[(1-\theta)q]^{1/q}}qt^{1-\theta}\left\Vert f\right\Vert _{\vec{X}_{1,\infty
}^{(1)}}\nonumber\\
&  \leq cq\frac{1}{[(1-\theta)q]^{1/q}}\left\{  \left(  1+[(1-\theta
)q]^{1/q}\right)  t^{-\theta}\left\Vert f\right\Vert _{X_{1}}+\frac
{1}{[(1-\theta)q]}t^{1-\theta}\left\Vert f\right\Vert _{\vec{X}_{1,\infty
}^{(1)}}\right\}  . \label{dolores}%
\end{align}
We balance the terms on the right hand side by choosing $t$ such that%
\[
\left(  1+[(1-\theta)q]^{1/q}\right)  t^{-\theta}\left\Vert f\right\Vert
_{X_{1}}=\frac{1}{[(1-\theta)q]}t^{1-\theta}\left\Vert f\right\Vert _{\vec
{X}_{1,\infty}^{(1)}},
\]
whence,%
\[
t=\left(  1+[(1-\theta)q]^{1/q}\right)  [(1-\theta)q]\frac{\left\Vert
f\right\Vert _{X_{1}}}{\left\Vert f\right\Vert _{\vec{X}_{1,\infty}^{(1)}}}.
\]
Inserting this value of $t$ in (\ref{dolores}) we find%
\begin{align*}
\left\Vert f\right\Vert _{\vec{X}_{\theta,q}^{(2)}}  &  \leq cq\frac
{1}{[(1-\theta)q]^{1/q}}\left\{  \left(  1+[(1-\theta)q]^{1/q}\right)
^{-\theta}[(1-\theta)q]^{-\theta}\left\Vert f\right\Vert _{X_{1}}^{1-\theta
}\left\Vert f\right\Vert _{\vec{X}_{1,\infty}^{(1)}}^{\theta}\right\} \\
&  \leq cq\left(  \frac{1}{[(1-\theta)q]^{1/q}}\right)  \left(  1+[(1-\theta
)q]^{1/q}\right)  ^{-\theta}[(1-\theta)q]^{-\theta}\left\Vert f\right\Vert
_{X_{1}}^{1-\theta}\left\Vert f\right\Vert _{\vec{X}_{1,\infty}^{(1)}}%
^{\theta},
\end{align*}
as we wished to show
\end{proof}

\begin{remark}
As an easy application of (\ref{bmo5}), we note that, if $X_{2}\subset X_{1},$
we can write%
\[
\left\Vert f\right\Vert _{\Delta\left(  (1-\theta)\vec{X}_{\theta,\frac
{1}{1-\theta}}^{(2)}\right)  }=\sup_{\theta}(1-\theta)\left\Vert f\right\Vert
_{\vec{X}_{\theta,\frac{1}{1-\theta}}^{(2)}}\leq c\left\Vert f\right\Vert
_{\vec{X}_{1,\infty}^{(1)}}.
\]
We believe that similar arguments would lead to computations with the
$\Delta_{p}$ method of extrapolation (cf. \cite{jm}, \cite{kami}) but this
lies outside the scope of this paper so we leave the issue for another occasion.
\end{remark}

\section{Recent uses of the oscillation operator and $L(\infty,q)$ spaces in
Analysis\label{secc:uses}}

We present several different applications connected with the material
developed in this note. The material is only but a sample of results. The
results presented are either new or they provide a new treatment to known
results\footnote{See also \cite{rota}, Lesson \#3.}. This section differs from
previous ones in that we proceed formally and, whenever possible, we refer the
reader to the literature for background material and complete details. Further
development of materials in this section will appear elsewhere, e.g. in
\cite{cwbjmm}, \cite{mil1}, \cite{mil2},..)

\subsection{On some inequalities for classical operators by
Bennett-DeVore-Sharpley and Bagby and Kurtz\label{secc:singular}}

In this section we show how the methods developed in this paper can be applied
to give a new approach to results on singular integrals and maximal operators
that appeared first in \cite{bedesa}, \cite{bagby}, and \cite{kurtz} (cf. also
the references therein).

Let $\not T  $ and $U$ be operators acting in a sufficiently large class of
testing functions, say the space $S$ of Schwartz testing functions on $R^{n}$.
Furthermore, suppose that there exists $C>0,$ such that for all $f\in S,$ the
following pointwise inequality holds%
\[
\left(  Tf\right)  ^{\#}(x)\leq CUf(x).
\]
Then, taking rearrangements we have%
\[
\left(  Tf\right)  ^{\#\ast}(t)\leq C(Uf)^{\ast}(t),t>0.
\]
Therefore,%
\[
\left\{  \int_{0}^{\infty}\left(  Tf\right)  ^{\#\ast}(t)^{p}dt\right\}
^{1/p}\leq C\left\{  \int_{0}^{\infty}(Uf)^{\ast}(t)^{p}dt\right\}  ^{1/p}.
\]
Now, by (\ref{ufa}) above, and the analysis that follows it, we see that%

\begin{align*}
\left\{  \int_{0}^{\infty}\left(  Tf\right)  ^{\ast}(t)^{p}dt\right\}  ^{1/p}
&  \leq\left(  \frac{p-1}{p}\right)  ^{1/p}pC\left\{  \int_{0}^{\infty
}(Uf)^{\ast}(t)^{p}dt\right\}  ^{1/p}\\
&  \leq Cp\left\{  \int_{0}^{\infty}(Uf)^{\ast}(t)^{p}dt\right\}
^{1/p}\text{.}%
\end{align*}
In other words,%
\begin{equation}
\left\Vert Tf\right\Vert _{p}\leq cp\left\Vert Uf\right\Vert _{p},
\label{ladej}%
\end{equation}
and we recover the main result of \cite{kurtz}.

We should also point out that the method of proof can be also implemented to
deal with the corresponding more general inequalities for doubling measures on
$R^{n}$ (cf. \cite{cof}, \cite{kurtz}, and the references therein).

\begin{remark}
It may be appropriate to mention that once one knows (\ref{ladej}) then one
could use the extrapolation theory of \cite{jm} to show (cf. \cite[Lemma
5]{kurtz}) that there exist absolute constants $C,\gamma>0,$ such that,%
\begin{equation}
(Tf)^{\ast}(t)\leq C\int_{\gamma t}^{\infty}\left(  Uf\right)  ^{\ast}%
(s)\frac{ds}{s},\text{ for all }f\in S. \label{ladej1}%
\end{equation}

\end{remark}

\subsection{Good-Lambda Inequalities\label{secc:lambda}}

These inequalities apparently originate in the celebrated work of
Burkholder-Gundy \cite{bg} (cf. also \cite{bu}) on extrapolation of martingale
inequalities. They have been used since then to great effect in probability,
and also in classical harmonic analysis, probably beginning with \cite{bg1}
and Coifman-Fefferman \cite{cof}. Inequalities on the oscillation operator
$f^{\ast\ast}-f^{\ast}$ are closely connected with good-lambda inequalities.
This connection was pointed out long ago by Neveu \cite{nev}, Herz
\cite{herz}, Bagby and Kurtz (cf. \cite{bagby}, \cite{kurtz}), among others.
In this section we formalize some of their ideas.

To fix matters, let $\mu$ be a measure on $R^{n},$ and let $T$ and $H$ be
operators acting on a sufficiently rich class of functions. A prototypical
good lambda inequality has the following form: for all $\lambda>0,\varepsilon
>0,$ there exists $c(\varepsilon)>0,$ with $c(\varepsilon)\rightarrow0,$ as
$\varepsilon\rightarrow0,$ such that
\begin{equation}
\mu\{\left\vert Tf\right\vert >2\lambda,\left\vert Hf\right\vert
\leq\varepsilon\lambda\}\leq c(\varepsilon)\mu\{\left\vert Tf\right\vert
>\lambda\}. \label{vale0}%
\end{equation}
The idea here is that if the behavior of $H$ is known on r.i. spaces, say on
$L^{p}$ spaces$,$ then we can also control the behavior of $T.$ Indeed, the
distribution function of $Tf$ can be controlled by the following elementary
argument%
\begin{align*}
\mu\{\left\vert Tf\right\vert  &  >2\lambda\}\leq\mu\{\left\vert Tf\right\vert
>2\lambda,\left\vert Hf\right\vert \leq\varepsilon\lambda\}+\mu\{\left\vert
Tf\right\vert >2\lambda,\left\vert Hf\right\vert >\varepsilon\lambda\}\\
&  \leq c(\varepsilon)\mu\{\left\vert Tf\right\vert >\lambda\}+\mu\{\left\vert
Hf\right\vert >\varepsilon\lambda\}.
\end{align*}
Then, since%
\[
\left\Vert f\right\Vert _{p}^{p}=p\int_{0}^{\infty}\lambda^{p-1}%
\mu\{\left\vert f\right\vert >\lambda\}d\lambda,
\]
we readily see that we can estimate the norm of $\left\Vert Tf\right\Vert
_{p}^{p}$ in terms of the norm of $\left\Vert Hf\right\Vert _{p}^{p}$ by means
of making $\varepsilon$ sufficiently small in order to be able collect the two
$\left\Vert Tf\right\Vert _{p}^{p}$ terms on the left hand side of the inequality.

In \cite{kurtz}, the author shows the following stronger good lambda
inequality for $f^{\#}:$There exists $B>0,$ such that for all $\varepsilon
>0,\lambda>0,$ and all locally integrable $f,$ we have%
\[
\mu\{\left\vert f\right\vert >Bf^{\#}+\lambda\}\leq\varepsilon\mu\{\left\vert
f\right\vert >\lambda\}.
\]
This inequality is used to show the following oscillation inequality (cf.
\cite[p 270]{kurtz})
\[
f^{\ast}(t)-f^{\ast}(2t)\leq Cf^{\#\ast}(\frac{t}{2}),t>0,
\]
where $C$ is an absolute constant.

More generally, the argument in \cite{kurtz} can be formalized as follows

\begin{theorem}
\label{hercules}Suppose that $T$ and $H$ are operators acting on the Schwartz
class $S,$ such that, moreover, for all $\varepsilon>0,$ there exists $B>0,$
such that for all $\lambda>0,$%
\begin{equation}
\mu\{\left\vert Tf\right\vert >B\left\vert Hf\right\vert +\lambda
\}\leq\varepsilon\mu\{\left\vert Tf\right\vert >\lambda\}. \label{vale}%
\end{equation}
Then, there exists a constant $C>0$ such that for all $t>0,$ and for all $f\in
S,$%
\begin{equation}
\left(  Tf\right)  ^{\ast}(t)-\left(  Tf\right)  ^{\ast}(2t)\leq C\left(
Hf\right)  ^{\ast}(\frac{t}{2}). \label{valetodo}%
\end{equation}

\end{theorem}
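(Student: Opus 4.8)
The plan is to run the abstract analogue of the computation on p.~270 of \cite{kurtz}, working purely with distribution functions and decreasing rearrangements. Fix $f\in S$ and $t>0$, and abbreviate $g=Tf$, $h=Hf$; one may assume $g^{\ast}(2t)<\infty$ and $h^{\ast}(t/2)<\infty$, since otherwise (\ref{valetodo}) is trivial. Apply the hypothesis (\ref{vale}) with the fixed choice $\varepsilon=\tfrac14$ to obtain the corresponding constant $B=B(\tfrac14)$, which depends on neither $f$ nor $t$, and set $C=B$.

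The crux is the elementary pointwise inclusion
\[
\bigl\{\,|g|>B\,h^{\ast}(t/2)+g^{\ast}(2t)\,\bigr\}\subseteq
\bigl\{\,|g|>B|h|+g^{\ast}(2t)\,\bigr\}\cup\bigl\{\,|h|>h^{\ast}(t/2)\,\bigr\}.
\]
Indeed, at a point $x$ of the left-hand set either $|h(x)|\le h^{\ast}(t/2)$, so that $B|h(x)|+g^{\ast}(2t)\le B\,h^{\ast}(t/2)+g^{\ast}(2t)<|g(x)|$ and $x$ belongs to the first set on the right, or else $|h(x)|>h^{\ast}(t/2)$ and $x$ belongs to the second.

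Next I would bound the measures of the two sets on the right. The definition of the rearrangement gives $\mu\{|h|>h^{\ast}(t/2)\}\le t/2$. For the other set, the hypothesis (\ref{vale}) applied with $\lambda=g^{\ast}(2t)$, combined with $\mu\{|g|>g^{\ast}(2t)\}\le 2t$ (right-continuity of the distribution function), yields $\mu\{|g|>B|h|+g^{\ast}(2t)\}\le \tfrac14\cdot 2t=t/2$. Adding the two estimates, $\mu\{|g|>B\,h^{\ast}(t/2)+g^{\ast}(2t)\}\le t$, and therefore, since $g^{\ast}(t)$ is the infimum of the levels whose distribution function does not exceed $t$,
\[
(Tf)^{\ast}(t)=g^{\ast}(t)\le B\,h^{\ast}(t/2)+g^{\ast}(2t)=C\,(Hf)^{\ast}(t/2)+(Tf)^{\ast}(2t),
\]
which rearranges to (\ref{valetodo}).

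There is no genuine obstacle here: the only bookkeeping point is that $\varepsilon$ must be taken small enough (precisely $\varepsilon\le\tfrac14$) so that the two measure contributions $\varepsilon\cdot 2t$ and $t/2$ still sum to at most $t$, which is exactly what forces the argument $t/2$ appearing in (\ref{valetodo}); and one should dispose at the outset of the degenerate cases in which $(Tf)^{\ast}(2t)$ or $(Hf)^{\ast}(t/2)$ is infinite. The constant $C$ is the single $B=B(\tfrac14)$ furnished by the hypothesis, hence independent of $f$ and $t$, as required.
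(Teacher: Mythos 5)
Your proof is correct and follows essentially the same route as the paper: fix $\varepsilon=\tfrac14$, take $\lambda=(Tf)^{\ast}(2t)$, use the inclusion $\{|Tf|>B(Hf)^{\ast}(t/2)+(Tf)^{\ast}(2t)\}\subseteq\{|Tf|>B|Hf|+(Tf)^{\ast}(2t)\}\cup\{|Hf|>(Hf)^{\ast}(t/2)\}$, bound each measure by $t/2$, and invoke the infimum characterization of $(Tf)^{\ast}(t)$. The only cosmetic difference is that you argue the inclusion directly rather than ``by contradiction,'' and you explicitly flag the degenerate cases; the substance is identical.
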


\begin{proof}
Let $\varepsilon=\frac{1}{4},$ and fix $B:=B(\frac{1}{4})$ such that
(\ref{vale}) holds for all $\lambda>0.$ Let $f\in S,$ and select
$\lambda=\left(  Tf\right)  ^{\ast}(2t).$ Then,%
\[
\mu\{\left\vert Tf\right\vert >B\left\vert Hf\right\vert +\left(  Tf\right)
^{\ast}(2t)\}\leq\frac{1}{4}\mu\{\left\vert Tf\right\vert >\left(  Tf\right)
^{\ast}(2t)\}\leq\frac{t}{2}.
\]

By definition we have,%
\[
\mu\{\left\vert Hf\right\vert >\left(  Hf\right)  ^{\ast}(\frac{t}{2}%
)\}\leq\frac{t}{2}.
\]
Consider the set $A=\{\left\vert Tf\right\vert >B(Hf)^{\ast}(\frac{t}%
{2})+\left(  Tf\right)  ^{\ast}(2t)\}.$ Then, it is easy to see, by
contradiction, that%
\[
A\subset\{\left\vert Tf\right\vert >B\left\vert Hf\right\vert +\left(
Tf\right)  ^{\ast}(2t)\}%
{\displaystyle\bigcup}
\{\left\vert Hf\right\vert >\left(  Hf\right)  ^{\ast}(t/2)\}.
\]
Consequently,%
\[
\mu(A)\leq\frac{t}{2}+\frac{t}{2}.
\]
Now, since%
\[
\left(  Tf\right)  ^{\ast}(t)=\inf\{s:\mu\{\left\vert Tf\right\vert >s\}\leq
t\},
\]
it follows that%
\[
\left(  Tf\right)  ^{\ast}(t)\leq B(Hf)^{\ast}(\frac{t}{2})+\left(  Tf\right)
^{\ast}(2t),
\]
as we wished to show.
\end{proof}

\begin{remark}
It is easy to compare the oscillation operators $\left(  Tf\right)  ^{\ast
}(t)-\left(  Tf\right)  ^{\ast}(2t)$ and $\left(  Tf\right)  ^{\ast\ast
}(t)-\left(  Tf\right)  ^{\ast}(t).$ For example, it is shown in \cite[Theorem
4.1 p 1223]{bmr} that%
\[
\left(  Tf\right)  ^{\ast}(\frac{t}{2})-\left(  Tf\right)  ^{\ast}%
(t)\leq2\left(  \left(  Tf\right)  ^{\ast\ast}(t)-\left(  Tf\right)  ^{\ast
}(t)\right)  ,
\]
and%
\begin{align}
\left(  \left(  Tf\right)  ^{\ast\ast}(t)-\left(  Tf\right)  ^{\ast
}(t)\right)   &  \leq\frac{1}{t}\int_{0}^{t}\left(  \left(  Tf\right)  ^{\ast
}(\frac{s}{2})-\left(  Tf\right)  ^{\ast}(s)\right)  ds\nonumber\\
&  +\left(  Tf\right)  ^{\ast}(\frac{t}{2})-\left(  Tf\right)  ^{\ast}(t).
\label{combinaos}%
\end{align}

\end{remark}

Combining Theorem \ref{hercules} and the previous remark we have the following

\begin{theorem}
Suppose that $T$ and $H$ satisfy the strong good-lambda inequality
(\ref{vale}). Then,%
\[
\left(  \left(  Tf\right)  ^{\ast\ast}(t)-\left(  Tf\right)  ^{\ast
}(t)\right)  \leq2B(Hf)^{\ast\ast}(\frac{t}{4}).
\]

\end{theorem}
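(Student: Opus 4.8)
The plan is to chain together the two displayed inequalities in the remark preceding the statement with Theorem~\ref{hercules}. Concretely, I would start from the second inequality of the remark, namely
\[
\left(  Tf\right)  ^{\ast\ast}(t)-\left(  Tf\right)  ^{\ast}(t)\leq\frac{1}{t}\int_{0}^{t}\left(  \left(  Tf\right)  ^{\ast}(\tfrac{s}{2})-\left(  Tf\right)  ^{\ast}(s)\right)  ds+\left(  Tf\right)  ^{\ast}(\tfrac{t}{2})-\left(  Tf\right)  ^{\ast}(t),
\]
which reduces everything to controlling the one-parameter oscillation $\left(  Tf\right)  ^{\ast}(\tfrac{s}{2})-\left(  Tf\right)  ^{\ast}(s)$. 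That is exactly what Theorem~\ref{hercules} delivers: applying \eqref{valetodo} with $t$ replaced by $s/2$ gives $\left(  Tf\right)  ^{\ast}(\tfrac{s}{2})-\left(  Tf\right)  ^{\ast}(s)\leq C\left(  Hf\right)  ^{\ast}(\tfrac{s}{4})$. Here I must be careful about the constant $C$: tracking the proof of Theorem~\ref{hercules} with $\varepsilon=\tfrac14$ and $B=B(\tfrac14)$, one gets $C=B$, so I can write $\left(  Tf\right)  ^{\ast}(\tfrac{s}{2})-\left(  Tf\right)  ^{\ast}(s)\leq B\left(  Hf\right)  ^{\ast}(\tfrac{s}{4})$ with the same $B$ that appears in the statement.

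Substituting this bound into both the integral term and the boundary term of the displayed inequality gives
\[
\left(  Tf\right)  ^{\ast\ast}(t)-\left(  Tf\right)  ^{\ast}(t)\leq\frac{B}{t}\int_{0}^{t}\left(  Hf\right)  ^{\ast}(\tfrac{s}{4})\,ds+B\left(  Hf\right)  ^{\ast}(\tfrac{t}{4}).
\]
For the integral term I would change variables $u=s/4$ to recognize $\frac{1}{t}\int_{0}^{t}\left(  Hf\right)  ^{\ast}(\tfrac{s}{4})\,ds=\frac{4}{t}\int_{0}^{t/4}\left(  Hf\right)  ^{\ast}(u)\,du=(Hf)^{\ast\ast}(\tfrac{t}{4})$, using the definition $g^{\ast\ast}(\tau)=\frac1\tau\int_0^\tau g^\ast$. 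For the boundary term, monotonicity of the rearrangement together with the elementary fact $(Hf)^{\ast}(\tfrac{t}{4})\leq (Hf)^{\ast\ast}(\tfrac{t}{4})$ lets me absorb it into the same quantity. Adding the two contributions yields $\left(  Tf\right)  ^{\ast\ast}(t)-\left(  Tf\right)  ^{\ast}(t)\leq 2B(Hf)^{\ast\ast}(\tfrac{t}{4})$, which is exactly the claim.

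There is no serious obstacle here; the only points requiring care are bookkeeping ones. First, I need to make sure the argument shift in \eqref{valetodo} is legitimate — it is, since Theorem~\ref{hercules} holds for all $t>0$, so replacing $t$ by $s/2$ is fine and the inner argument becomes $(s/2)/2=s/4$. Second, I should double-check that the constant in \eqref{valetodo}, when specialized as in its proof, is literally $B(\tfrac14)$ and not some larger multiple of it; if it were $CB$ for an absolute $C$, the final bound would become $2CB(Hf)^{\ast\ast}(\tfrac{t}{4})$, still of the stated form but with a different constant. Third, the change of variables and the $(Hf)^{\ast}\le (Hf)^{\ast\ast}$ estimate are both completely routine. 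So the proof is essentially a two-line substitution once Theorem~\ref{hercules} and the remark are in hand, and the main content has already been done.
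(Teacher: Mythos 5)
Your proof is correct and takes exactly the approach the paper intends — the paper's own proof is the one-line remark ``combine (\ref{valetodo}) with (\ref{combinaos}),'' and you have simply carried out the substitution, the change of variables $u=s/4$, and the $(Hf)^{\ast}\leq(Hf)^{\ast\ast}$ bound explicitly. The constant tracking ($C=B$ from the choice $\varepsilon=\tfrac14$ in the proof of Theorem~\ref{hercules}) is also correct and is what makes the final constant come out to $2B$.
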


\begin{proof}
The desired result follows combining (\ref{valetodo}) with (\ref{combinaos}).
\end{proof}

\begin{remark}
It is easy to convince oneself that the good-lambda inequalities of the form
(\ref{vale}) are, in fact, stronger than the usual good-lambda inequalities,
e.g. of the form (\ref{vale0}) (cf. \cite{kurtz}).
\end{remark}

\begin{remark}
Clearly there are many nice results lurking in the background of this section.
For example, a topic that comes to mind is to explore the use of good-lambda
inequalities in the interpolation theory of operator ideals and its
applications (cf. \cite{cobos}, \cite{mastylo}, and the references therein).
On the classical analysis side it would be of interest to explore the
connections of the interpolation methods with the maximal inequalities due to
Muckenhoupt-Wheeden and Hedberg-Wolff (cf. \cite{arz}, \cite{hon} and the
references therein).
\end{remark}

\subsection{Extrapolation of inequalities: Burkholder-Gundy-Herz meet
Calder\'{o}n-Maz'ya and Cwikel et al.\label{secc:bgextra}}

The leitmotif of \cite{bg} is the extrapolation of inequalities for the
classical operators acting on martingales (e.g. martingale transforms, maximal
operators, square functions..). There are two main ingredients to the method.
First, the authors, modeling on the classical operators acting on martingales,
single out properties that the operators under consideration will be required
to satisfy. Then they usually assume that an $L^{p}$ or weak type $L^{p}$
estimate holds, and from this information they deduce a full family of $L^{p}$
or even Orlicz inequalities. The main technical step of the extrapolation
procedure consists of using the assumptions we have just described in order to
prove suitable good-lambda inequalities. The method is thus different from the
usual interpolation theory, which works for \textbf{all} operators that
satisfy a \textbf{pair} of given estimates.

In \cite{cwbjmm} we have shown how to formulate some of the assumptions of
\cite{bg} in terms of optimal decompositions to compute $K-$functionals. Then,
assuming that the operators to be extrapolated act on interpolation spaces,
one can extract oscillation inequalities using interpolation theory. In
particular, the developments in \cite{cwbjmm} allow the extrapolation of
operators that do not necessarily act on martingales, but also on function
spaces, e.g. gradients, square functions, Littlewood-Paley functions, etc. The
basic technique involved to achieve the extrapolation is to use the
assumptions to prove an oscillation rearrangement inequality\footnote{Herz
\cite{he} also developed a different technique to extrapolate oscillation
rearrangement inequalities for martingale operators.}.

Unfortunately, \cite{cwbjmm} is still unpublished, although some of the
results have been discussed elsewhere (cf. \cite{mamicoc}) or will appear soon
(cf. \cite{mil1}). In keeping with the theme of this note, in this section I
want to present some more details on how one can extrapolate Sobolev
inequalities and encode the information using the oscillation operator
$f^{\ast\ast}-f^{\ast}.$

Let us take as a starting point the weak type Gagliardo-Nirenberg Sobolev
inequality in $R^{n}$ (cf. \cite{leo})$:$%
\begin{equation}
\left\Vert f\right\Vert _{L(n^{\prime},\infty)}=\left\Vert f\right\Vert
_{(L^{1}(R^{n}),L^{\infty}(R^{n}))_{1/n,\infty}}\leq c_{n}\left\Vert \nabla
f\right\Vert _{L^{1}},f\in Lip_{0}(R^{n}). \label{abarca}%
\end{equation}
Let $f\in Lip_{0}(R^{n}),$ and assume without loss that $f$ is positive. Let
$t>0,$ then an optimal decomposition for the computation of
\[
K(t,f):=K(t,f;L^{1}(R^{n}),L^{\infty}(R^{n}))=\int_{0}^{t}f^{\ast}(s)ds,
\]
is given by%
\begin{equation}
f=f_{f^{\ast}(t)}+(f-f_{f^{\ast}(t)}), \label{shows}%
\end{equation}
where%
\begin{equation}
f_{f^{\ast}(t)}(x)=\left\{
\begin{array}
[c]{ll}%
f(x)-f^{\ast}(t) & \text{if }f^{\ast}(t)<f(x)\\
0 & \text{if }f(x)\leq f^{\ast}(t)
\end{array}
\right.  . \label{rf3}%
\end{equation}
By direct computation we have%
\begin{align*}
K(t,f)  &  \leq\left\Vert f_{f^{\ast}(t)}\right\Vert _{L^{1}}+t\left\Vert
f-f_{f^{\ast}(t)}\right\Vert _{L^{\infty}}\\
&  =\left(  \int_{0}^{t}f^{\ast}(s)ds-tf^{\ast}(t)\right)  +tf^{\ast}(t)\\
&  =\int_{0}^{t}f^{\ast}(s)ds.
\end{align*}
On the other hand, if $f=f_{0}+f_{1},$ with $f_{0}\in L^{1},f_{1}\in
L^{\infty},$ then%
\begin{align*}
\int_{0}^{t}f^{\ast}(s)ds  &  \leq\int_{0}^{t}f_{0}^{\ast}(s)ds+\int_{0}%
^{t}f_{1}^{\ast}(s)ds\\
&  \leq\left\Vert f_{0}\right\Vert _{L^{1}}+t\left\Vert f_{1}\right\Vert
_{L^{\infty}}.
\end{align*}
Therefore,%
\[
K(t,f)=\left\Vert f_{f^{\ast}(t)}\right\Vert _{L^{1}}+t\left\Vert
f-f_{f^{\ast}(t)}\right\Vert _{L^{\infty}}.
\]
Also note that, confirming (\ref{a1}) and (\ref{a2}) above, by direct
computation we have,%
\begin{align*}
\left\Vert f_{f^{\ast}(t)}\right\Vert _{L^{1}}  &  =\int_{0}^{t}f^{\ast
}(s)ds-tf^{\ast}(t)\\
&  =t\left(  f^{\ast\ast}(t)-f^{\ast}(t)\right)  ,
\end{align*}%
\[
\left\Vert f-f_{f^{\ast}(t)}\right\Vert _{L^{\infty}}=f^{\ast}(t).
\]
The commutation of the gradient with truncations\footnote{(cf. \cite{maz},
\cite{bakr}, \cite{haj}, \cite{mmp})} implies%
\[
\left\Vert \nabla f_{f^{\ast}(t)}\right\Vert _{L^{1}}\leq\int_{\{f>f^{\ast
}(t)\}}\left\vert \nabla f\right\vert dx.
\]
Therefore%
\[
\left\Vert \nabla f_{f^{\ast}(t)}\right\Vert _{L^{1}}\leq\int_{0}%
^{t}\left\vert \nabla f\right\vert ^{\ast}(s)ds.
\]
We apply the inequality (\ref{abarca}) to $f_{f^{\ast}(t)}$. We find%
\begin{align*}
\left\Vert f_{f^{\ast}(t)}\right\Vert _{(L^{1}(R^{n}),L^{\infty}%
(R^{n}))_{1/n,\infty}}  &  \leq c_{n}\left\Vert \nabla f_{f^{\ast}%
(t)}\right\Vert _{L^{1}}\\
&  \leq c_{n}\int_{0}^{t}\left\vert \nabla f\right\vert ^{\ast}(s)ds.
\end{align*}
We estimate the left hand side%
\begin{align*}
\left\Vert f_{f^{\ast}(t)}\right\Vert _{(L^{1}(R^{n}),L^{\infty}%
(R^{n}))_{1/n,\infty}}  &  =\sup_{s>0}s^{-1/n}K(s,f_{f^{\ast}(t)})\\
&  =\sup_{s>0}s^{-1/n}K(s,f-(f-f_{f^{\ast}(t)}))\\
&  \geq t^{-1/n}K(t,f-(f-f_{f^{\ast}(t)}))\\
&  \geq t^{-1/n}\{K(t,f)-K(t,f-f_{f^{\ast}(t)})\},
\end{align*}
where the last inequality follows by the triangle inequality, since
$K(t,\cdot)$ is a norm. Now,%
\[
K(t,f)=tf^{\ast\ast}(t),
\]
and%
\begin{align*}
K(t,f-f_{f^{\ast}(t)})  &  \leq t\left\Vert f_{f^{\ast}(t)}-f\right\Vert
_{L^{\infty}}\\
&  =tf^{\ast}(t).
\end{align*}
Thus,%
\[
\left\Vert f_{f^{\ast}(t)}\right\Vert _{(L^{1}(R^{n}),L^{\infty}%
(R^{n}))_{1/n,\infty}}\geq t^{-1/n}\left(  tf^{\ast\ast}(t)-tf^{\ast
}(t)\right)  .
\]
Combining estimates we find%
\[
\left(  tf^{\ast\ast}(t)-tf^{\ast}(t)\right)  t^{-1/n}\leq c_{n}\int_{0}%
^{t}\left\vert \nabla f\right\vert ^{\ast}(s)ds,
\]
which can be written as%
\begin{equation}
f^{\ast\ast}(t)-f^{\ast}(t)\leq c_{n}t^{1/n}\left\vert \nabla f\right\vert
^{\ast\ast}(t). \label{venta}%
\end{equation}
This inequality had been essentially obtained by Kolyada \cite{kol} and is
equivalent (cf. \cite{mmp}) to earlier inequalities by Talenti \cite{talenti}%
.. but its role in the study of limiting Sobolev inequalities, and the
introduction of the $L(\infty,q)$ spaces, was only pointed out in \cite{bmr}.
In \cite{mmp} it was shown that (\ref{venta}) is equivalent to the
isoperimetric inequality. More generally, Martin-Milman extended (\ref{venta})
to Gaussian measures (\cite{mamijfa}), and later\footnote{See also
\cite{kalis} for a maximal function approach to oscillation inequalities for
the gradient.} (cf. \cite{mamica}) to metric measure spaces, where the
inequality takes the following form%
\begin{equation}
f^{\ast\ast}(t)-f^{\ast}(t)\leq\frac{t}{I(t)}\left\vert \nabla f\right\vert
^{\ast\ast}(t), \label{venta1}%
\end{equation}
where $I$ is the isoperimetric profile associated with the underlying
geometry. In fact they show the equivalence of (\ref{venta1}) with the
corresponding isoperimetric inequality (for a recent survey cf. \cite{mamica}).

One can prove the Gaussian Sobolev version of (\ref{venta1}) using the same
extrapolation procedure as above, but using as a starting point Ledoux's
inequality \cite{le} as a replacement of the Gagliardo-Nirenberg inequality
(cf. \cite{mamicoc}). Further recent extensions of the Martin-Milman
inequality on Gaussian measure can be found in \cite{xiao1}.

Let us now show another Sobolev rearrangement inequality for oscillations,
apparently first recorded in \cite{mmp}. We now take as our starting
inequality for the extrapolation procedure the sharp form of the
Gagliardo-Nirenberg inequality, which can be formulated as%
\[
\left\Vert f\right\Vert _{L(n^{\prime},1)}=\left\Vert f\right\Vert
_{(L^{1}(R^{n}),L^{\infty}(R^{n}))_{1/n,1}}\leq c_{n}\left\Vert \nabla
f\right\Vert _{L^{1}},f\in Lip_{0}(R^{n}).
\]
We apply the inequality to $f_{f^{\ast}(t)}.$ The right hand side we have
already estimated,%
\[
\left\Vert f_{f^{\ast}(t)}\right\Vert _{(L^{1}(R^{n}),L^{\infty}%
(R^{n}))_{1/n,1}}\leq c_{n}\int_{0}^{t}\left\vert \nabla f\right\vert ^{\ast
}(s)ds.
\]
Now,%
\begin{align*}
\left\Vert f_{f^{\ast}(t)}\right\Vert _{(L^{1}(R^{n}),L^{\infty}%
(R^{n}))_{1/n,1}} &  =\int_{0}^{\infty}s^{-1/n}K(s,f_{f^{\ast}(t)})\frac
{ds}{s}\\
&  \geq\int_{0}^{t}s^{-1/n}K(s,f-(f-f_{f^{\ast}(t)}))\frac{ds}{s}\\
&  \geq\int_{0}^{t}s^{-1/n}\{K(s,f)-K(s,f-f_{f^{\ast}(t)})\}\frac{ds}{s}\\
&  =\int_{0}^{t}s^{-1/n}\{sf^{\ast\ast}(s)-K(s,f-f_{f^{\ast}(t)})\}\frac
{ds}{s}.
\end{align*}
Note that since $f^{\ast}$ decreases, for $s\leq t,$ we have%
\begin{align*}
K(s,f-f_{f^{\ast}(t)}) &  \leq s\left\Vert f-f_{f^{\ast}(t)}\right\Vert
_{L^{\infty}}\\
&  =sf^{\ast}(t)\\
&  \leq sf^{\ast}(s).
\end{align*}
Therefore,%
\begin{align*}
\left\Vert f\right\Vert _{(L^{1}(R^{n}),L^{\infty}(R^{n}))_{1/n,1}} &
\geq\int_{0}^{t}s^{-1/n}\{sf^{\ast\ast}(s)-sf^{\ast}(s)\}\frac{ds}{s}\\
&  =\int_{0}^{t}s^{1-1/n}\{f^{\ast\ast}(s)-f^{\ast}(s)\}\frac{ds}{s}.
\end{align*}
Consequently,
\[
\int_{0}^{t}s^{1-1/n}\{f^{\ast\ast}(s)-f^{\ast}(s)\}\frac{ds}{s}\leq c_{n}%
\int_{0}^{t}\left\vert \nabla f\right\vert ^{\ast}(s)ds,
\]
an inequality first shown in \cite{mmp}.

\subsection{Bilinear Interpolation\label{secc:bilinear}}

In this section we show an extension of (\ref{intro3}) to a class of bilinear
operators that have a product or convolution like structure. These operators
were first introduced by O'Neil (cf. \cite[Exercise 5, page 76]{bl}).

Let $\vec{A},\vec{B},\vec{C},$ be Banach pairs, and let $\Pi$ be a bilinear
bounded operator such that%
\[
\Pi:\left\{
\begin{array}
[c]{cc}%
A_{0}\times B_{0} & \rightarrow C_{0}\\
A_{0}\times B_{1} & \rightarrow C_{1}\\
A_{1}\times B_{0} & \rightarrow C_{1}%
\end{array}
\right.  .
\]
For example, the choice $A_{0}=B_{0}=C_{0}=L^{\infty},$ and $A_{1}=B_{1}%
=C_{1}=L^{1},$ corresponds to a regular product operator $\Pi_{1}(f,g)=fg,$
while the choice $A_{0}=B_{0}=C_{1}=L^{1},$ and $A_{1}=B_{1}=C_{0}=L^{\infty}$
corresponds to a convolution operator $\Pi_{2}(f,g)=f\ast g$ on $R^{n},$ say.
The main boundedness result is given by (cf. \cite[Exercise 5, page 76]{bl}):
\begin{equation}
\left\Vert \Pi(f,g)\right\Vert _{\vec{C}_{\theta,r}}\leq\left\Vert
f\right\Vert _{\vec{A}_{\theta_{1},q_{1}r}}\left\Vert g\right\Vert _{\vec
{B}_{\theta_{2},q_{2}r}}, \label{bilineal}%
\end{equation}
where $\theta,\theta_{i}\in(0,1),\theta=\theta_{1}+\theta_{2},q\,_{i}%
,r\in\lbrack1,\infty],i=1,2,$ and $\frac{1}{r}\leq\frac{1}{q_{1}r}+\frac
{1}{q_{2}r}.$

\begin{example}
To illustrate the ideas, make it easy to compare results, and to avoid the tax
of lengthy computations with indices, we shall only model an extension of
(\ref{intro3}) for $\Pi_{1}$. Let us thus take $\vec{A}=\vec{B}=\vec{C}%
=(A_{0},A_{1}),$ and let $\vec{X}=(X_{0},X_{1})=(A_{1},A_{0}).$ Further, in
order to be able to use the quadratic form argument outlined in the
Introduction, we choose $q_{1}=q_{2}=2,$ $r=p,$ $\theta_{1}=\theta_{2}%
=\frac{\theta}{2}.$ Then, from (\ref{bilineal}) we get%
\begin{equation}
\left\Vert \Pi_{1}(f,g)\right\Vert _{\vec{X}_{1-\theta,p}}\preceq\left\Vert
f\right\Vert _{\vec{X}_{1-\frac{\theta}{2},2p}}\left\Vert g\right\Vert
_{\vec{X}_{1-\frac{\theta}{2},2p}}. \label{laqueprecisa}%
\end{equation}
Since $(1-\frac{1}{2})(1-\theta)+\frac{1}{2}1=1-\frac{\theta}{2},$ we can use
the reiteration theorem to write%
\[
\vec{X}_{1-\frac{\theta}{2},2p}=(\vec{X}_{1-\theta,p},X_{1})_{\frac{1}{2}%
,2p},
\]
and find that%
\begin{equation}
\left\Vert \Pi_{1}(f,g)\right\Vert _{\vec{X}_{1-\theta,p}}\preceq\left\Vert
f\right\Vert _{(\vec{X}_{1-\theta,p},X_{1})_{\frac{1}{2},2p}}\left\Vert
g\right\Vert _{(\vec{X}_{1-\theta,p},X_{1})_{\frac{1}{2},2p}}.
\label{retornari}%
\end{equation}
By the equivalence theorem\footnote{In this example we are not interested on
the precise dependence of the constants of equivalence.} (cf. (\ref{equiva})
above),%
\[
(\vec{X}_{1-\theta,p},X_{1})_{\frac{1}{2},2p}=(\vec{X}_{1-\theta,p}%
,X_{1})_{\frac{1}{2},2p}^{(2)}.
\]
Therefore,
\begin{align*}
\left\Vert \Pi_{1}(f,g)\right\Vert _{\vec{X}_{1-\theta,p}}  &  \preceq
\left\Vert f\right\Vert _{(\vec{X}_{1-\theta,p},X_{1})_{\frac{1}{2},2p}^{(2)}%
}\left\Vert g\right\Vert _{(\vec{X}_{1-\theta,p},X_{1})_{\frac{1}{2},2p}%
^{(2)}}\\
&  \preceq\left\Vert f\right\Vert _{\vec{X}_{1-\theta,p}}^{1/2}\left\Vert
f\right\Vert _{\left(  \vec{X}_{1-\theta,p},X_{1}\right)  _{1,\infty}^{(1)}%
}^{1/2}\left\Vert g\right\Vert _{\vec{X}_{1-\theta,p}}^{1/2}\left\Vert
g\right\Vert _{\left(  \vec{X}_{1-\theta,p},X_{1}\right)  _{1,\infty}^{(1)}%
}^{1/2},
\end{align*}
where in the last step we have used (\ref{bmo5}) applied to the pair $(\vec
{X}_{1-\theta,p},X_{1}).$ Consequently, by the quadratic formula, we finally
obtain
\[
\left\Vert \Pi_{1}(f,g)\right\Vert _{\vec{X}_{1-\theta,p}}\preceq\left\Vert
f\right\Vert _{\vec{X}_{1-\theta,p}}\left\Vert g\right\Vert _{\left(  \vec
{X}_{1-\theta,p},X_{1}\right)  _{1,\infty}^{(1)}}+\left\Vert f\right\Vert
_{\left(  \vec{X}_{1-\theta,p},X_{1}\right)  _{1,\infty}^{(1)}}\left\Vert
g\right\Vert _{\vec{X}_{1-\theta,p}}.
\]

\end{example}

\begin{remark}
The method above uses one of the more powerful tools of the real method: the
re-scaling of inequalities. However, in this case there is substantial
difficulty for the implementation of the result since techniques for the
actual computation of the space $\left(  \vec{X}_{1-\theta,p},X_{1}\right)
_{1,\infty}^{(1)}$ are not well developed at present time\footnote{It is an
interesting open problem to modify Holmstedt's method to be able to keep
track, in a nearly optimal way, both coordinates in the Gagliardo diagram,
when doing reiteration. For more on the computation of Gagliardo coordinate
spaces see the forthcoming \cite{mil3}.}$.$ Therefore we avoid the use of
(\ref{retornari}) and instead prove directly that if\footnote{To simplify the
computations we model the $L^{p}$ case here. Another simplification is that in
this argument we don't need to be fuzzy about constants.} $\theta=\frac{1}%
{p},$ we have
\begin{equation}
\left\Vert f\right\Vert _{\vec{X}_{1-\frac{\theta}{2},2p}}\preceq\left\Vert
f\right\Vert _{\vec{X}_{1-\theta,p}}^{1/2}\left\Vert f\right\Vert _{\vec
{X}_{1,\infty}^{(1)}}^{1/2}.\label{berkovich}%
\end{equation}
This given, applying (\ref{berkovich}) to both terms on the right hand side of
(\ref{laqueprecisa}) and then using the quadratic form argument we find%
\begin{equation}
\left\Vert \Pi_{1}(f,g)\right\Vert _{\vec{X}_{1-\theta,p}}\preceq\left\Vert
f\right\Vert _{\vec{X}_{1-\theta,p}}\left\Vert g\right\Vert _{\vec
{X}_{1,\infty}^{(1)}}+\left\Vert g\right\Vert _{\vec{X}_{1-\theta,p}%
}\left\Vert f\right\Vert _{\vec{X}_{1,\infty}^{(1)}}.\label{elcaso}%
\end{equation}
In the particular case of product operators and $L^{p}$ spaces, $1<p<\infty,$
(\ref{elcaso}) reads%
\[
\left\Vert fg\right\Vert _{L^{p}}\preceq\left\Vert f\right\Vert _{L^{p}%
}\left\Vert g\right\Vert _{L(\infty,\infty)}+\left\Vert g\right\Vert _{L^{p}%
}\left\Vert f\right\Vert _{L(\infty,\infty)},
\]
which should be compared with (\ref{intro3}) recalling that%
\[
\left\Vert f\right\Vert _{L(\infty,\infty)}\leq c\left\Vert f\right\Vert
_{BMO}.
\]

\end{remark}

\begin{proof}
(of (\ref{berkovich})). To simplify the notation we let $K(t,f;\vec{X})=K(t).$
Then,%
\begin{align*}
\left\Vert f\right\Vert _{\vec{X}_{1-\frac{\theta}{2},2p}}  &  =\left\{
\int_{0}^{\infty}\left(  K(s)s^{-(1-\frac{\theta}{2})}\right)  ^{2p}\frac
{ds}{s}\right\}  ^{1/2p}\\
&  \leq\left\{  \int_{0}^{t}\left(  K(s)s^{-(1-\frac{\theta}{2})}\right)
^{2p}\frac{ds}{s}\right\}  ^{1/2p}+\left\{  \int_{t}^{\infty}\left(
K(s)s^{-(1-\frac{\theta}{2})}\right)  ^{2p}\frac{ds}{s}\right\}  ^{1/2p}\\
&  =(1)+(2).
\end{align*}
We proceed to estimate each of these two terms starting with $(2):$%
\begin{align*}
(2)  &  =\left\{  \int_{t}^{\infty}\left(  K(s)s^{-(1-\theta)}\right)
^{p}s^{(1-\theta)p}s^{-(1-\frac{\theta}{2})p}\left(  K(s)s^{-(1-\frac{\theta
}{2})}\right)  ^{p}\frac{ds}{s}\right\}  ^{1/2p}\\
&  \leq\left\{  \sup_{s\geq t}s^{(1-\theta)}s^{-(1-\frac{\theta}{2})}\left(
K(s)s^{-(1-\frac{\theta}{2})}\right)  \right\}  ^{1/2}\left\{  \int
_{t}^{\infty}\left(  K(s)s^{-(1-\theta)}\right)  ^{p}\frac{ds}{s}\right\}
^{1/2p}\\
&  \leq\left\{  \sup_{s\geq t}\frac{K(s)}{s}\right\}  ^{1/2}\left\Vert
f\right\Vert _{\vec{X}_{1-\theta,p}}^{1/2}\\
&  =\left\{  \frac{K(t)}{t}\right\}  ^{1/2}\left\Vert f\right\Vert _{\vec
{X}_{1-\theta,p}}^{1/2}\\
&  =t^{-1/2}t^{(1-\theta)/2}\left\{  K(t)t^{-(1-\theta)}\right\}
^{1/2}\left\Vert f\right\Vert _{\vec{X}_{1-\theta,p}}^{1/2}\\
&  \leq t^{-1/p2}\left\Vert f\right\Vert _{\vec{X}_{1-\theta,p}}.
\end{align*}
Moreover,%
\begin{align*}
(1)  &  =\left\{  \int_{0}^{t}\left(  \frac{K(s)}{s}s^{-(1-\frac{\theta}%
{2})+1}\right)  ^{2p}\frac{ds}{s}\right\}  ^{1/2p}\\
&  \leq\left\{  \int_{0}^{t}\left(  \left[  \frac{K(s)}{s}-\frac{K(t)}%
{t}\right]  s^{-(1-\frac{\theta}{2})+1}\right)  ^{2p}\frac{ds}{s}\right\}
^{1/2p}+\left\{  \int_{0}^{t}\left(  \frac{K(t)}{t}s^{-(1-\frac{\theta}{2}%
)+1}\right)  ^{2p}\frac{ds}{s}\right\}  ^{1/2p}\\
&  =(a)+(b).
\end{align*}

The term $(b)$ is readily estimated:%
\begin{align*}
(b)  &  =\frac{K(t)}{t}\left\{  \int_{0}^{t}s^{\frac{\theta}{2}2p}\frac{ds}%
{s}\right\}  ^{1/2p}\\
&  \sim\frac{K(t)}{t}t^{1/2p}\\
&  =K(t)t^{-(1-\theta)}t^{(1-\theta)}t^{1/2p-1}\\
&  \preceq\left\Vert f\right\Vert _{\vec{X}_{1-\theta,p}}t^{-1/2p}.
\end{align*}
Next we use the familiar estimate%
\begin{align*}
\left[  \frac{K(s)}{s}-\frac{K(t)}{t}\right]   &  =\int_{s}^{t}\left[
\frac{K(s)}{s}-K^{\prime}(s)\right]  \frac{ds}{s}\\
&  \leq\left\Vert f\right\Vert _{\vec{X}_{1,\infty}}\log\frac{t}{s},
\end{align*}
to see that%
\begin{align*}
(a)  &  \leq\left\Vert f\right\Vert _{\vec{X}_{1,\infty}}\left\{  \int_{0}%
^{t}s\left(  \log\frac{t}{s}\right)  ^{2p}\frac{ds}{s}\right\}  ^{1/2p}\\
&  \preceq\left\Vert f\right\Vert _{\vec{X}_{1,\infty}^{(1)}}t^{1/2p}.
\end{align*}
Collecting estimates, we have%
\[
\left\Vert f\right\Vert _{\vec{X}_{1-\frac{\theta}{2},2p}}\preceq
t^{-1/p2}\left\Vert f\right\Vert _{\vec{X}_{1-\theta,p}}+\left\Vert
f\right\Vert _{\vec{X}_{1,\infty}^{(1)}}t^{1/2p}.
\]
Balancing the two terms on the right hand side we find%
\[
\left\Vert f\right\Vert _{\vec{X}_{1-\frac{\theta}{2},2p}}\preceq\left\Vert
f\right\Vert _{\vec{X}_{1-\theta,p}}^{1/2}\left\Vert f\right\Vert _{\vec
{X}_{1,\infty}^{(1)}}^{1/2},
\]
as we wished to show.
\end{proof}

\begin{remark}
It would be of interest to implement a similar model analysis for $\Pi
_{2}(f,g)=f\ast g.$ We claim that it is easy, however, we must leave the task
to the interested reader.
\end{remark}

\begin{remark}
The results of this section are obviously connected with Leibniz rules in
function spaces. This brings to mind the celebrated commutator theorem of
Coifman-Rochberg-Weiss \cite{cofro} which states that if $T$ is a
Calder\'{o}n-Zygmund operator (cf. \cite{cosa}), and $b$ is a $BMO$ function,
then $[T,b]f=bTf-T(bf),$ defines a bounded linear operator\footnote{In fact,
the boundedness of $[T,b]$ for all CZ operators implies that $b\in BMO$ (cf.
\cite{janson})}%
\[
\lbrack T,b]:L^{p}(R^{n})\rightarrow L^{p}(R^{n}),1<p<\infty.
\]
The result has been extended in many different directions. In particular, an
abstract theory of interpolation of commutators has evolved from it (cf.
\cite{cwkamiro}, \cite{rr}, for recent surveys). In this theory a crucial role
is played by a class of operators $\Omega$ such that, for bounded operators
$T$ on a given interpolation scale, the commutator $[T,\Omega]$ is also
bounded. The operators $\Omega$ often satisfy some of the functional equations
associated with derivation operators. At present time we know very little
about how this connection comes about or how to exploit it in concrete
applications (cf. \cite{cwmiro}, \cite{konmil}). More in keeping with the
topic of this paper, and in view of many possible interesting applications, it
would be of also of interest to study oscillation inequalities for commutators
$[T,\Omega]$ in the context of interpolation theory. In this connection we
should mention that in \cite{mamiwa}, the authors formulate a generalized
version of the Coifman-Rochberg-Weiss commutator theorem, valid in the context
of real interpolation, where in a suitable fashion the function space $W,$
introduced in \cite{misa}, plays the role of $BMO:$
\[
W=\{f\in L_{loc}^{1}(0,\infty):\sup_{t}\left\vert \frac{1}{t}\int_{0}%
^{t}f(s)ds-f(t)\right\vert <\infty\}.
\]
Obviously, $f\in L(\infty,\infty)$ iff $f^{\ast}\in W.$
\end{remark}

\section{A brief personal note on Cora Sadosky}

I still remember well the day (circa March 1977) that I met Mischa Cotlar and
Cora Sadosky at Mischa's apartment in Caracas (cf. \cite{milcot}). I was
coming from Sydney, en route to take my new job as a Visiting Professor at
Universidad de Los Andes, in Merida. And while, of course, I had heard a lot
about them, I did not know them personally. Mischa and I had exchanged some
correspondence (no e-mail those days!). I had planned to come to spend an
afternoon with the Cotlars, taking advantage of a 24 hour stopover while en
route to Merida, a colonial city in the Andes region of Venezuela. As it turns
out, my flight for the next day had been cancelled, and Mischa and Yani
invited me to stay over at their place.

When I came to the Cotlar's apartment, Corita and Mischa were working in the
dining room. Mischa gave me a brief explanation of what they were doing
mathematically. In fact, he dismissed the whole enterprise\footnote{I would
learn quickly that Mischa's modesty was legendary.}. I would learn much later
that what they were doing then, would turn out to be very innovative and
influential research\footnote{From this period I can mention \cite{cotlar},
\cite{cotlar1}, \cite{cotlar2}.}.

Corita, who also knew about my impending arrival, greeted me with something
equivalent to *Oh, so you really do exist*!\footnote{Existence here to be
taken in a non mathematical sense. Of course at point in time I did NOT
*exist* mathematically!} Being a *porte\~{n}o* myself, I quickly found
Corita's style quite congenial and the conversation took off. We formed a
friendship that lasted for as long as she lived.

As I later learned, most people called her Cora, but the Cotlars, and a few
other old friends, that knew her from childhood, called her Corita\footnote{In
Spanish *Corita* means little Cora..}. Having being introduced to her at the
Cotlar's home, I proceeded to call her Corita too...and that was the way it
would always be\footnote{Many many years later she told me that by then
everyone called her Cora, except Mischa and myself and that she would prefer
for me to call her Cora. I said, of course, Corita!..}.

By early 1979 I moved for one semester to Maracaibo and afterwards to
Brasilia. Mischa was very helpful connecting me first with Jorge Lebowitz
(Maracaibo), and then with Djairo Figuereido (Brasilia). In the mean time
Corita herself had moved to the US, where we met again, at an AMS Special
Session in Harmonic Analysis.

At the time I had a visiting position at UI at Chicago, and was trying to find
a tenure track job. She took an interest in my situation, and gave me very
useful suggestions on the job hunting process. She would remain very helpful
throughout my career. In particular, when she learned\footnote{She had a
membership at IAS herself that year}, on her own, about my application for a
membership at the Institute for Advanced Study, in 1984, she supported my
case.... I did not know this until she called me to let me know that my
application had been accepted!

Corita and I met again many times over the course of the years. There were
conferences on Interpolation Theory\footnote{Harmonic analysts trained in the
sixties had a special place in their hearts for Interpolation theory. It was
after all a theory to which the great masters of the Chicago school (e.g.
Calder\'{o}n, Stein, Zygmund) had made fundamental contributions.} in Lund and
Haifa, on Harmonic Analysis in Washington D.C., Madrid and Boca Raton, there
were Chicago meets to celebrate various birthdays of Alberto Calder\'{o}n, a
special session on Harmonic Analysis in Montreal, etc, etc. Vanda and I went
to have dinner with Corita and her family, when we all coincided during a
visit to Buenos Aires in l985. She was instrumental in my participation at
Mischa's 80 birthday Conference in Caracas, 1994. We wrote papers for books
that each of us edited. At some point in time she and her family came to visit
us in Florida.

A few months before she passed, I was helping her, via e-mail, with the
texting\footnote{I mean using TeX!} of a paper of hers about Mischa Cotlar.

Probably the best way I have to describe Corita is to say that she was a force
of nature. She was a brilliant mathematician, with an intense but charming
personality. Having had to endure herself exile, discrimination, and very
difficult working conditions, she was very sensitive to the plight of others.
I am sure many of the testimonies in this book will describe how much she
helped to provide opportunities for younger mathematicians to develop their careers.

Some things are hard to change. As much as I could not train myself to call
her *Cora*, in our relationship she was always \textquotedblleft big sister".
I will always be grateful to her, for all her help and her friendship.

I know that the space $BMO$ had a very special place in her mathematical
interests and, indeed, $BMO$ spaces appear in many considerations throughout
her works. For this very reason, and whatever the merits of my small
contribution, I have chosen to dedicate this note on $BMO$ inequalities to her memory.

\textbf{Acknowledgement}. I am grateful to Sergey Astashkin and Michael Cwikel
for useful comments that helped improve the quality of the paper. Needless to
say that I remain fully responsible for the remaining shortcomings.


\begin{thebibliography}{99}                                                                                               %


\bibitem {aalto}D. Aalto, \textsl{Weak }$L^{\infty}$\textsl{ and }%
$BMO$\textsl{ in metric spaces}. Boll. Unione Mat. Ital. \textbf{5} (2012), 369--385.

\bibitem {alvamil}J. Alvarez and M. Milman, \textsl{Spaces of Carleson
measures: duality and interpolation}. Ark. Mat. \textbf{25} (1987), 155--174.

\bibitem {arz}N. Arcozzi, R. Rochberg, E. Sawyer, and B. D. Wick, Brett,
\textsl{Potential theory on trees, graphs and Ahlfors-regular metric spaces}.
Potential Anal. \textbf{41} (2014), 317--366.

\bibitem {ash}S. Astashkin and K. Lykov, \textsl{Extrapolation description of
rearrangement invariant spaces and related problems}. Banach and function
spaces III, pp. 1--52, Yokohama Publ., Yokohama, 2011.

\bibitem {bagby}R. J. Bagby and D. S. Kurtz, \textsl{A rearranged
good-}$\lambda$\textsl{ inequality}. Trans. Amer. Math. Soc. \textbf{293}
(1986), 71-81.

\bibitem {bakr}D. Bakry, T. Coulhon, M. Ledoux and L. Saloff-Coste,
\textsl{Sobolev inequalities in disguise}. Indiana Univ. Math. J. \textbf{44}
(1995) 1033--1074.

\bibitem {bmr}J. Bastero, M. Milman and F. J. Ruiz Blasco, \textsl{A note on
}$L(\infty,q)$\textsl{ spaces and Sobolev embeddings}. Indiana Univ. Math. J.
\textbf{52} (2003), 1215--1230.

\bibitem {beka}C. Bennett, \textsl{Banach function spaces and interpolation
methods I. The abstract theory}. J. Funct. Anal. \textbf{17} (1974), 409--440.

\bibitem {bedesa}C. Bennett, R. A. DeVore and R. Sharpley, \textsl{Weak-L}%
$^{\infty}$\textsl{ and BMO}. Ann. of Math \textbf{113} (1981), 601--611.

\bibitem {bs}C. Bennett and R. Sharpley, \textsl{Interpolation of operators}.
Pure and Applied Mathematics \textbf{129}, Academic Press, Inc., Boston, MA, 1988.

\bibitem {besa}C. Bennett and R. Sharpley, \textsl{Weak-type inequalities for
}$H^{p}$\textsl{ and BMO}. Harmonic analysis in Euclidean spaces (Proc.
Sympos. Pure Math., Williams Coll., Williamstown, Mass., 1978), Part 1, pp.
201--229, Proc. Sympos. Pure Math., XXXV, Amer. Math. Soc., Providence, R.I., 1979.

\bibitem {bl}J. Bergh and J. L\"{o}fstrom, \textsl{Interpolation Spaces}.
Springer-Verlag, New York, Berlin, 1976.

\bibitem {brez}H. Brezis and S. Wainger, \textsl{A note on limiting cases of
Sobolev embeddings and convolution inequalities}. Comm. Partial Diff. Eq.
\textbf{5} (1980), 773-789.

\bibitem {ca}A. P. Calder\'{o}n, \textsl{Intermediate spaces and
interpolation, the complex method}. Studia Math. \textbf{24} (1964), 113-190.

\bibitem {ca1}A.P. Calder\'{o}n, \textsl{Spaces between }$L^{1}$\textsl{ and
}$L^{\infty}$\textsl{ and the theorem of Marcinkiewicz}. Studia Math.
\textbf{26} (1966), 273--299.

\bibitem {bu}D. L. Burkholder, \textsl{Distribution function inequalities for
martingales}. Ann. of Prob. \textbf{1} (1973), 19-42.

\bibitem {bg}D. L. Burkholder and R. F. Gundy, \textsl{Extrapolation and
interpolation of quasi-linear operators on martingales}. Acta Math.
\textbf{124} (1970), 249-304.

\bibitem {bg1}D. L. Burkholder and R. F. Gundy, \textsl{Distribution function
inequalities for the area integrals}. Studia Math. \textbf{44 }(1972), 527-544.

\bibitem {chenzu}J. Chen, and X. Zhu, \textsl{A note on BMO and its
application}. J. Math. Anal. Appl. \textbf{303} (2005), 696--698.

\bibitem {cobos}F. Cobos and M. Milman, \textsl{On a limit class of
approximation spaces}. Num. Funct. Anal. Opt. \textbf{11} (1990), 11-31.

\bibitem {cof}R. R. Coifman and C. Fefferman, \textsl{Weighted norm
inequalities for maximal functions and singular integrals}. Studia Math.
\textbf{51} (1974), 241-250.

\bibitem {cofro}R. R. Coifman, R. Rochberg and G. Weiss, \textsl{Factorization
theorems for Hardy spaces in several variables}. Ann. Math. 103 (1976), 611--635.

\bibitem {cotlar}M. Cotlar and C. Sadosky, \textsl{Transform\'{e}e de Hilbert,
Th\'{e}or\`{e}me de Bochner et le Probl\`{e}me des Moments I}. C.R. Acad. Sci.
Paris Ser. A-B \textbf{285} (1977).

\bibitem {cotlar1}M. Cotlar and C. Sadosky, \textsl{Transform\'{e}e de
Hilbert, Th\'{e}or\`{e}me de Bochner et le Probl\`{e}me des Moments II}. C.R.
Acad. Sci. Paris Ser. A-B \textbf{285} (1977).

\bibitem {cotlar2}M. Cotlar and C. Sadosky, \textsl{On the Helson-Szeg\"{o}
Theorem and a Related Class of Modified Toeplitz Kernels}. In
\textquotedblleft Harmonic Analysis in Euclidean Spaces", Proc. Symp Pure
Math. Amer. Math. Soc. \textbf{35} (1979), 383-407.

\bibitem {cwja}M. Cwikel and S. Janson, \textsl{Real and complex interpolation
methods for finite and infinite families of Banach spaces}. Adv. Math.
\textbf{66} (1987) 234--290.

\bibitem {cwbjmm0}M. Cwikel, B. Jawerth and M. Milman, \textsl{On the
Fundamental Lemma of Interpolation Theory}. J. Approx. Theory \textbf{60}
(1990), 70-82.

\bibitem {cwbjmm}M. Cwikel, B. Jawerth and M. Milman, \textsl{A note on
extrapolation of inequalities}. Preprint, 2010.

\bibitem {cwkamiro}M. Cwikel, N. Kalton, M. Milman, and R. Rochberg, \textsl{A
unified theory of commutator estimates for a class of interpolation methods}.
Adv. Math. \textbf{169} (2002), 241-312.

\bibitem {cwmiro}M. Cwikel, M. Milman, and R. Rochberg, \textsl{An
introduction to Nigel Kalton's work on differentials of complex interpolation
processes for K\"{o}the spaces}. Arxiv: http://arxiv.org/pdf/1404.2893.pdf .

\bibitem {cwsa}M. Cwikel and Y. Sagher, \textsl{Weak type classes}. J. Funct.
Anal. \textbf{52 }(1983), 11-18.

\bibitem {css}M. Cwikel, Y. Sagher and P. Shvartsman, \textsl{A
geometrical/combinatorical question with implications for the JohnNirenberg
inequality for BMO functions}. Banach Center Publ. \textbf{95} (2011), 45-53.

\bibitem {fest}C. Fefferman and E. M. Stein, $H^{p}$\textsl{ spaces of several
variables}. Acta Math. \textbf{129} (1972), 137--193.

\bibitem {garsiagrenoble}A. M. Garsia and E. Rodemich, \textsl{Monotonicity of
certain functionals under rearrangements}. Ann. Inst. Fourier (Grenoble)
\textbf{24} (1974), 67-116.

\bibitem {haj}P. Hajlasz,\textsl{\ Sobolev inequalities, truncation method,
and John domains}. Papers in Analysis, Rep. Univ. Jyv\"{a}skyl\"{a} Dep. Math.
Stat. 83, Univ. Jyv\"{a}skyl\"{a}, Jyv\"{a}skyl\"{a}, 2001, pp 109--126.

\bibitem {ha}K. Hansson, \textsl{Imbedding theorems of Sobolev type in
potential theory}. Math Scand \textsl{45} (1979), 77-102.

\bibitem {herz}C. Herz, \textsl{A best possible Marcinkiewicz theorem with
applications to martingales}. Mc Gill University, 1974.

\bibitem {he}C. Herz, \textsl{An interpolation principle for martingale
inequalities}. J. Funct. Anal. \textbf{22} (1976), 1-7.

\bibitem {holm0}T. Holmstedt, \textsl{Interpolation of quasi-normed spaces}.
Math. Scand. \textbf{26} (1970), 177-199.

\bibitem {holm}T. Holmstedt, \textsl{Equivalence of two methods of
interpolation}. Math. Scand. \textbf{18} 1966, 45--52.

\bibitem {hon}P. Honz\'{\i}k and B. J. Jaye, \textsl{On the good-}$\lambda
$\textsl{ inequalities for nonlinear potentials}. Proc. Amer. Math. Soc.
\textbf{140} (2012), 4167--4180.

\bibitem {janson}S. Janson, \textsl{Mean oscillation and commutators of
singular integral operators}. Ark. Math. \textbf{16} (1978), 263-270.

\bibitem {ja}B. Jawerth, \textsl{The K-functional for }$H^{1}$\textsl{ and
}$BMO.$ Proc. Amer. Math. Soc. \textbf{92} (1984), 67--71.

\bibitem {jm1}B. Jawerth and M. Milman, \textsl{Interpolation of weak type
spaces}. Math. Z. \textbf{201} (1989), 509--519.

\bibitem {jm}B. Jawerth and M. Milman, \textsl{Extrapolation theory with
applications}. Mem. Amer. Math. Soc. \textbf{89 }(1991), no. 440.

\bibitem {jami}B. Jawerth and M. Milman, \textsl{Lectures on
optimization,image processing, and interpolation Theory.} In "Function Spaces,
Inequalities and Interpolation", Lectures on Optimization, Image Processing
and Interpolation Theory, Spring School on Analysis held at Paseky (2007) (in preparation).

\bibitem {jamig}B. Jawerth and M. Milman, \textsl{Gagliardo discretizations
and best approximation} (with an Appendix by Emil Cornea). To appear.

\bibitem {johnN}F. John and L. Nirenberg, \textsl{On functions of bounded mean
oscillation}. Comm. Pure Appl. Math. \textbf{14} (1961), 415--426.

\bibitem {kalis}J. Kalis and M. Milman, \textsl{Symmetrization and sharp
Sobolev inequalities in metric spaces}. Rev. Mat. Complutense \textbf{22}
(2009), 499-515.

\bibitem {kami}G. E. Karadzhov and M. Milman, \textsl{Extrapolation theory:
new results and applications}. J. Approx. Theory \textbf{133} (2005), 38--99.

\bibitem {kamixi}G. E. Karadzhov, M. Milman and J. Xiao, \textsl{Limits of
higher-order Besov spaces and sharp reiteration theorems}. J. Funct. Anal.
\textbf{221} (2005), 323--339

\bibitem {kol}V. I. Kolyada, \textsl{Rearrangements of functions and embedding
theorems}. Russ. Math. Surv. \textbf{44} (1989), 73--117.

\bibitem {konmil}H. K\"{o}nig and V. Milman, \textsl{Characterizing the
derivative and the entropy function by the Leibniz rule}, \textsl{with an
appendix by D. Faifman}. J. Funct. Anal. \textbf{261} (2011), 1325--1344.

\bibitem {kowa}H. Kozono and H. Wadade, \textsl{Remarks on Gagliardo-Nirenberg
type inequality with critical Sobolev space and BMO}. Math. Z. \textbf{259}
(2008), 935--950.

\bibitem {kowa1}H. Kozono and Y. Taniuchi, \textsl{Bilinear estimates in BMO
and the Navier-Stokes equations}. Math. Z. \textbf{235} (2000), 173--194.

\bibitem {krular}N. Krugljak, L. Maligranda and L. E. Persson, \textsl{An
elementary approach to the fractional Hardy inequality}. Proc. Amer. Math.
Soc. \textbf{128} (2000), 727-734.

\bibitem {kru}N. Kruglyak, Y. Sagher and P. Shvartsman, \textsl{Weak-type
operators and the strong fundamental lemma of real interpolation theory}.
Studia Math. \textbf{170} (2005), 173-201.

\bibitem {kurtz}D. S. Kurtz, \textsl{Operator estimates using the sharp
function}, Pacific J. Math. \textbf{139} (1989), 267-277.

\bibitem {le}M. Ledoux, \textsl{Isop\'{e}rim\'{e}trie et in\'{e}galit\'{e}es
de Sobolev logarithmiques gaussiennes}. C. R. Acad. Sci. Paris \textbf{306}
(1988), 79-92.

\bibitem {leo}G. Leoni, \textsl{A First Course in Sobolev Spaces}. Graduate
Studies in Mathematics, vol. \textbf{105}. American Mathematical Society, New
York, 2009.

\bibitem {mamipams}J. Mart\'{\i}n M. Milman, \textsl{Symmetrization
inequalities and Sobolev embeddings}. Proc. Amer. Math. Soc. \textbf{134
}(2006), 2335--2347.

\bibitem {mami}J. Martin and M. Milman, \textsl{A note on Sobolev inequalities
and limits of Lorentz spaces.} L. De Carli et al. (eds), Interpolation theory
and applications, pp. 237--245, \ Contemp. Math. \textbf{445}, Amer. Math.
Soc., 2007.

\bibitem {mamicoc}J. Martin and M. Milman, \textsl{Sobolev inequalities,
rearrangements, isoperimetry and interpolation spaces}. C. Houdre et al.
(eds), Concentration, functional inequalities and isoperimetry, pp. 167-193,
Contemp. Math. \textbf{545}, Amer. Math. Soc., 2011.

\bibitem {mamijfa}J. Martin and M. Milman, \textsl{Isoperimetry and
Symmetrization for Logarithmic Sobolev inequalities}. J. Funct. Anal.
\textbf{256} (2009), 149-178.

\bibitem {mamica}J. Martin and M. Milman, \textsl{Towards a Unified Theory of
Sobolev Inequalities}. A. M. Stokolos et al. (eds.), Special Functions,
Partial Differential Equations, and Harmonic Analysis, Springer Proceedings in
Mathematics \& Statistics \textbf{108}, 163-201, 2014.

\bibitem {mmp}J. Martin, M. Milman and E. Pustylnik, \textsl{Sobolev
Inequalities: Symmetrization and Self Improvement via truncation}. J. Funct.
Anal. \textbf{252} (2007), 677-695.

\bibitem {mamiwa}J. Martin and M. Milman, \textsl{An abstract
Coifman-Rochberg-Weiss commutator theorem}. In L. De Carli, K. Kazarian and M.
Milman (eds), Classical Analysis and Applications: A conference in honor of
Daniel Waterman, World Scientific, 141-160, 2008.

\bibitem {mastylo}M. Mastylo and M. Milman, \textsl{Interpolation of real
method spaces via some ideals of operators}. Studia Math. \textbf{136} (1999), 17-35.

\bibitem {maz}V. G. Maz'ya, \textsl{Sobolev Spaces with applications to
elliptic partial differential equations}. Second, revised and augmented
edition. Grundlehren der Mathematischen Wissenschaften [Fundamental Principles
of Mathematical Sciences], \textbf{342}. Springer, Heidelberg, 2011.

\bibitem {mac}D. McCormick, J. Robinson and J. L. Rodrigo, \textsl{Generalised
Gagliardo-Nirenberg inequalities using weak Lebesgue spaces and BMO}. Milan J.
Math. \textbf{81} (2013), 265--289.

\bibitem {restaurant}R. Metzler and J. Klafter, \textsl{The restaurant at the
end of the random walk: recent developments in the description of anomalous
transport by fractional dynamics}. J. of Physics A \textbf{37} (2004), R161--R208.

\bibitem {milcot}M. Milman, \textsl{Remembering Mischa}. In \textquotedblleft
An afternoon in honor of Mischa Cotlar", Univ. New Mexico, 2007, http://www.math.unm.edu/conferences/10thAnalysis/testimonies.html.

\bibitem {milta}M. Milman, \textsl{Notes on limits of Sobolev spaces and the
continuity of interpolation scales}. Trans. Amer. Math. Soc. \textbf{357
}(2005), 3425--3442.

\bibitem {mil}M. Milman, \textsl{A note on reversed Hardy inequalities and
Gehring's lemma}. Comm. Pure Appl. Math. \textbf{50} (1997), 311--315.

\bibitem {mil1}M. Milman, \textsl{Self improving inequalities via penalty
methods}. Paper presented at High Dimensional Probability VII, Corsica, 2014,
manuscript in preparation.

\bibitem {mil2}M. Milman, \textsl{Extrapolation: three lectures in Barcelona}.
Lecture Notes based on a 2013 course given at Univ. Autonoma Barcelona, in preparation.

\bibitem {mil3}M. Milman, \textsl{BMO: Marcinkiewicz spaces, the E functional
and Reiteration}, in preparation.

\bibitem {milpu}M. Milman and E. Pustylnik, \textsl{On sharp higher order
Sobolev embeddings}. Commun. Contemp. Math. \textbf{6} (2004), 495-511.

\bibitem {misa}M. Milman and Y. Sagher, \textsl{An interpolation theorem}.
Ark. Mat. \textbf{22} (1984), 33--38.

\bibitem {nev}J. Neveu, \textsl{Martingales a temps discret}. Dunod, Paris, 1972.

\bibitem {ov}V.I. Ovchinnikov, \textsl{The method of orbits in interpolation
theory}. Mathematical Reports \textbf{1} (1984), 349--516.

\bibitem {pus}E. Pustylnik, \textsl{On a rearrangement-invariant function set
that appears in optimal Sobolev embeddings}. J. Math. Anal. Appl. \textbf{344}
(2008) 788--798.

\bibitem {ren}P. F. Renaud, \textsl{A reversed Hardy inequality}. Bull.
Austral. Math. Soc. \textbf{34}, 1986, 225--232.

\bibitem {rr}R. Rochberg, \textsl{Uses of commutator theorems in analysis}.
Interpolation theory and applications, 277--295, Contemp. Math. \textbf{445},
Amer. Math. Soc., Providence, RI, 2007.

\bibitem {rota}G. C. Rota, \textsl{Ten lessons I wish I had been taught}. http://www.ams.org/notices/199701/comm-rota.pdf

\bibitem {cosa}C. Sadosky, \textsl{Interpolation of operators and singular
integrals. An introduction to harmonic analysis}. Monographs and Textbooks in
Pure and Applied Math. \textbf{53}, Marcel Dekker, Inc., New York, 1979.

\bibitem {sag}Y. Sagher, \textsl{An application of the approximation
functional in interpolation theory}. Conference on harmonic analysis in honor
of Antoni Zygmund, Vol. I, II (Chicago, Ill., 1981), 802--809, Wadsworth Math.
Ser., Wadsworth, Belmont, CA, 1983.

\bibitem {sasch}Y. Sagher and P. Shvartsman, \textsl{Rearrangement-Function
Inequalities and InterpolationTheory. }J. of Appr. Th. \textbf{119} (2002), 214--251.

\bibitem {talenti}G. Talenti, \textsl{Elliptic equations and rearrangements}.
Ann. Scuola Norm. Sup. Pisa Cl. Sci. \textbf{4} (1976), 697--718.

\bibitem {tru}N. S. Trudinger, \textsl{On imbeddings into Orlicz spaces and
some application}s. J. Math. Mech. \textbf{17} (1967), 473--483.

\bibitem {xiao}J. Xiao, \textsl{A Reverse BMO-Hardy Inequality}. Real Anal.
Exchange \textbf{25} (1999), 673-678.

\bibitem {xiao1}J. Xiao, \textsl{Gaussian BV capacity}. Adv. Cal. Var. Mar.
2015, on line.
\end{thebibliography}
\end{document}